\newtheorem{theorem}{Theorem}[section]
\newtheorem{definition}[theorem]{Definition}
\newtheorem{proposition}[theorem]{Proposition}
\newtheorem{corollary}[theorem]{Corollary}
\newtheorem{remark}[theorem]{Remark}
\newtheorem{example}[theorem]{Example}
\newtheorem{question}[theorem]{Question}
\numberwithin{equation}{section}
\numberwithin{figure}{section}
\renewcommand{\div}{\mathrm{div}} 
\def\intave#1{\int_{#1}\hbox{\llap{$\raise2.3pt\hbox{\vrule
height.9pt width7pt}\phantom{\scriptstyle{#1}}\mkern-2mu$}}}
\begin{document}
\title{On smooth interior approximation of Sets of Finite Perimeter}

\author{Changfeng Gui}
\address{Changfeng Gui, The University of Texas at San Antonio, TX, USA.
}
\email{changfeng.gui@utsa.edu}
\author{Yeyao Hu}
\address{Yeyao Hu, School of Mathematics and Statistics, HNP-LAMA, Central South University, Changsha, Hunan 410083, P. R. China.
}
\email{huyeyao@gmail.com}

\author{Qinfeng Li}
\address{Qinfeng Li, School of Mathematics, Hunan University, Changsha, Hunan, China.
}
\email{liqinfeng1989@gmail.com}

\maketitle
\begin{abstract}
In this paper, we prove that for any bounded set of finite perimeter $\Omega \subset \mathbb{R}^n$, we can choose smooth sets $E_k \Subset \Omega$ such that $E_k \rightarrow \Omega$ in $L^1$ and \begin{align}
    \label{moregeneralapproximation}
\limsup_{i \rightarrow \infty} P(E_i) \le P(\Omega)+C_1(n) \mathscr{H}^{n-1}(\partial \Omega \cap \Omega^1).
\end{align}In the above $\Omega^1$ is the measure-theoretic interior of $\Omega$, $P(\cdot)$ denotes the perimeter functional on sets, and $C_1(n)$ is a dimensional constant. 

Conversely, we prove that for any sets $E_k \Subset \Omega$ satisfying $E_k \rightarrow \Omega$ in $L^1$, there exists a dimensional constant $C_2(n)$ such that the following inequality holds:
\begin{align}
\label{gap}
   \liminf_{k \rightarrow \infty} P(E_k) \ge P(\Omega)+ C_2(n) \mathscr{H}^{n-1}(\partial \Omega \cap \Omega^1).
\end{align}
In particular, these results imply that for a bounded set $\Omega$ of finite perimeter,\begin{align}
\label{char*}
    \mathscr{H}^{n-1}(\partial \Omega \cap \Omega^1)=0
\end{align}
holds if and only if there exists a sequence of smooth sets $E_k$ such that $E_k \Subset \Omega$, $E_k \rightarrow \Omega$ in $L^1$ and $P(E_k) \rightarrow P(\Omega)$. 

\end{abstract}

{\bf Key words}: Bounded Variation, Trace, Extension, Finite Perimeter, Isoperimetric Inequality

{\bf 2010 AMS subject classification}: 28A75, 46E35, 49Q15, 49Q20.

\section{Introduction}
\subsection{Motivation}
Let $\Omega$ be a bounded set of finite perimeter in $\mathbb{R}^n$. It is well known that for such $\Omega$, there exists a sequence of smooth sets $E_k$ such that $E_k \rightarrow \Omega$ in $L^1$ and $P(E_k) \rightarrow P(\Omega)$ as $k \rightarrow \infty$, see for example in \cite[Theorem 13.8]{Maggi}.  However, in general such smooth sets $E_k$ cannot be chosen to be compactly contained in $\Omega$, as indicated in \cite{Schmidt} for domains containing interior fractures or cracks:
\begin{example}
\label{modelexample}
Let $R$ be the open rectangle $(-1,1) \times (-1,1)$ in $\mathbb{R}^2$ and $S=(-1,1)\times \{0\}$. Then set $\Omega:=R\setminus S=R_1 \cup R_2$, where $R_1$ is the upper open rectangle $(-1,1)\times (0,1)$ and $R_2$ is the lower open rectangle $(-1,1) \times (-1,0)$. Then for any sets $E_k \Subset \Omega$ and $E_k \rightarrow \Omega$ in $L^1$, we have $P(E_k)=P(E_k \cap R_1)+P(E_k \cap R_2)$, where we have used the fact $E_k \Subset \Omega$. Hence by lower semicontinuity of perimeter functionals we have
\begin{align}
\label{model}
    \liminf_{k \rightarrow \infty} P(E_k) \ge P(R_1)+P(R_2)=P(\Omega)+2 \mathscr{H}^{n-1}(S)>P(\Omega).
\end{align}
\end{example}

This paper was originally motivated by the following question concerning the one-sided approximation of set of finite perimeter by smooth sets strictly from within:

\begin{question}
\label{fund2}
What is the optimal condition on a  bounded set $\Omega\subset \mathbb{R}^n$ of finite perimeter such that there exist smooth sets $E_k \Subset \Omega, E_k \rightarrow \Omega$ in $L^1$ and $P(E_k)\rightarrow P(\Omega)$?
\end{question}

The answer to the question is not only of self interest in the theory of set of finite perimeter, but also connected to many other problems such as the study of normal traces of bounded divergence measure fields from the view point of interior approximation (see \cite{CCT}, \cite{CLT}, \cite{CTZ} and many other references therein), the prescribed mean curvature problem (see \cite{LS} and references therein), the equations of $1$-Laplacian and minimal surface type (see \cite{SS16}) and so on.

In this paper, we have found the necessary and sufficient condition for bounded set $\Omega$ to have an interior approximation sequence as described in Question \ref{fund2}. In order to state previous results and our results more precisely, let us first introduce some notations, most of which are from standard textbooks such as \cite{afp} and \cite{Maggi} : 

For a measurable set $\Omega \subset \mathbb{R}^n$, we use $|\Omega|$ to denote the Lebesgue measure of $\Omega$. We use $\Omega^1$ to denote the measure theoretic interior of $\Omega$ (i.e., the set of points in $\mathbb{R}^n$ where $\Omega$ has density $1$ with respect to Lebesgue measure), and we use $\Omega^0$ to denote the the measure theoretic exterior of $\Omega$  (i.e., the set of points in $\mathbb{R}^n$ where $\Omega$ has density $0$ with respect to Lebesgue measure). We use $\mathscr{H}^{n-1}(\cdot)$ to denote the $(n-1)$-Hausdorff measure of a set.  The reduced boundary of $\Omega$ is denoted as $\partial^* \Omega$, and the measure-theoretic boundary of $\Omega$ is denoted as $\partial^m \Omega$. We say $E_i \rightarrow \Omega$ in $L^1$, if $|E_i \Delta \Omega| \rightarrow 0$.
\hfill\\

\subsection{Previous results} There have been several important results related to interior approximation of sets of finite perimeter in literature.

First, in Chen-Torres-Ziemer\cite{CTZ}, the authors prove that for any set of finite perimeter, there exists smooth sets approximating the set essentially from the measure-theoretic interior with respect to any Radon measure which is absolutely continuous with respect to $\mathscr{H}^{n-1}$. This approximation works for general set of finite perimeter, but the approximating sequence of sets are not compactly contained in $\Omega$ as described in Question \ref{fund2}. We refer to \cite{CTZ} for precise statement and applications, and see also \cite{MG17} for a new proof which also fills the gap in \cite{CTZ}.

Also in \cite{CTZ}, the authors give a sufficient condition for $\Omega$ to have an interior approximation sequence of sets which are compactly contained in $\Omega$ as in Question \ref{fund2}:
\begin{theorem}(\cite[Theorem 8.2]{CTZ})
\label{dens}
If there exist $\delta>0, r_0>0$ such that
\begin{align}
\label{density}
    \frac{|B_r(x) \cap \Omega|}{|B_r(x)|}< 1-\delta, \quad \forall x \in \partial \Omega, \, \forall\, 0<r\le r_0
\end{align}
then there exists a smooth sequence $E_k \Subset \Omega$, $E_k \rightarrow \Omega$ in $L^1$ and $P(E_k) \rightarrow P(\Omega)$ as $k \rightarrow \infty$.
\end{theorem}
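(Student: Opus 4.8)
The plan is to run the classical mollification‑plus‑coarea construction, but to choose the level of the superlevel set high enough that it stays strictly inside $\Omega$. First I would reduce to the case that $\Omega$ is open: by \eqref{density} and the Lebesgue density theorem no point of $\partial\Omega$ has density $1$ for $\Omega$, so $|\Omega\cap\partial\Omega|=0$, hence $|\Omega\,\Delta\,\mathrm{int}(\Omega)|=0$. Replacing $\Omega$ by $\mathrm{int}(\Omega)$ changes neither the $L^1$‑class nor the perimeter, and since $\partial(\mathrm{int}\,\Omega)\subseteq\partial\Omega$ the hypothesis \eqref{density} persists; so assume $\Omega$ bounded and open. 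Fix a radially non‑increasing mollifier $\rho\in C^\infty_c(B_1)$ with $\rho\ge0$, $\int\rho=1$, put $\rho_\epsilon(x)=\epsilon^{-n}\rho(x/\epsilon)$ and $u_\epsilon:=\chi_\Omega*\rho_\epsilon\in C^\infty_c(\mathbb{R}^n)$, $0\le u_\epsilon\le1$. I will use the standard facts $u_\epsilon\to\chi_\Omega$ in $L^1$, $\int_{\mathbb{R}^n}|\nabla u_\epsilon|\le P(\Omega)$ (Jensen), $\mathrm{Lip}(u_\epsilon)\le L/\epsilon$ with $L:=\int|\nabla\rho|$, the coarea identity $\int_0^1 P(\{u_\epsilon>t\})\,dt=\int_{\mathbb{R}^n}|\nabla u_\epsilon|$, and Sard's theorem. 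The approximating set $E_k$ will be $\{u_{\epsilon_k}>t_k\}$ for suitably chosen $\epsilon_k\downarrow0$ and regular values $t_k$.

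The heart of the argument is the geometric claim: \emph{there is $t_0\in(1-\delta,1)$, depending only on $\delta$ and $\rho$, such that $\{u_\epsilon\ge t\}\Subset\Omega$ whenever $0<\epsilon\le r_0$ and $t\in[t_0,1)$.} First I would show $u_\epsilon\le1-\delta$ on $\partial\Omega$ for $\epsilon\le r_0$: by the layer‑cake formula $u_\epsilon(x)=\int_0^\infty|\{\rho_\epsilon(x-\cdot)>s\}\cap\Omega|\,ds$, and since $\{\rho_\epsilon(x-\cdot)>s\}$ is a ball $B_{r(s)}(x)$ with $r(s)\le\epsilon\le r_0$, the density bound \eqref{density} gives $|\{\rho_\epsilon(x-\cdot)>s\}\cap\Omega|\le(1-\delta)|\{\rho_\epsilon(x-\cdot)>s\}|$; integrating in $s$ proves the bound. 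Next, for $x\in\mathbb{R}^n\setminus\overline\Omega$ with $d:=\mathrm{dist}(x,\overline\Omega)>0$: on one hand $B_d(x)\cap\Omega=\varnothing$, so $u_\epsilon(x)\le1-\int_{B_{d/\epsilon}(0)}\rho=1-m(d/\epsilon)$ where $m(r):=\int_{B_r}\rho$ is increasing with $m(0)=0$; on the other hand, comparing $x$ with a nearest point $p\in\partial\Omega$, $u_\epsilon(x)\le u_\epsilon(p)+\mathrm{Lip}(u_\epsilon)\,d\le(1-\delta)+Ld/\epsilon$. If $u_\epsilon(x)\ge t$ these two inequalities force $m^{-1}(1-t)\ge d/\epsilon\ge (t-1+\delta)/L$, which is impossible once $t$ is close enough to $1$, since the left side tends to $0$ and the right side to $\delta/L>0$ as $t\uparrow1$. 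Hence $\{u_\epsilon\ge t\}$ is disjoint from $\partial\Omega$ and from $\mathbb{R}^n\setminus\overline\Omega$, i.e.\ contained in the open set $\Omega$; being closed and bounded it is compact, so $\{u_\epsilon>t\}\Subset\Omega$ as well.

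To extract the sequence I would use coarea together with a small Fatou argument. Since $\int_0^1P(\{u_\epsilon>t\})\,dt=\int|\nabla u_\epsilon|\le P(\Omega)$ and (by Chebyshev) $\{u_\epsilon>t\}\to\Omega$ in $L^1$ for every fixed $t\in(0,1)$, lower semicontinuity of perimeter gives $\liminf_{\epsilon\to0}P(\{u_\epsilon>t\})\ge P(\Omega)$ for each such $t$; applying Fatou on $[0,t_0]$ and on $[t_0,1]$ together with $\int|\nabla u_\epsilon|\to P(\Omega)$ forces $\int_{t_0}^1P(\{u_\epsilon>t\})\,dt\to(1-t_0)P(\Omega)$, hence $\int_{t_0}^1\big(P(\{u_\epsilon>t\})-P(\Omega)\big)^+\,dt\to0$ (the negative part tends to $0$ pointwise and is bounded by $P(\Omega)$, so it vanishes by dominated convergence). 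Now for each $k$ choose $\epsilon_k\le r_0$, $\epsilon_k\downarrow0$, with $\|u_{\epsilon_k}-\chi_\Omega\|_{L^1}$ and $\int_{t_0}^1(P(\{u_{\epsilon_k}>t\})-P(\Omega))^+\,dt$ both small; then choose in $(t_0,\tfrac{1+t_0}{2})$ a level $t_k$ that is a regular value of $u_{\epsilon_k}$, at which $P(\{u_{\epsilon_k}>t_k\})=\mathscr{H}^{n-1}(\{u_{\epsilon_k}=t_k\})$, and which avoids the Chebyshev bad set where $(P(\{u_{\epsilon_k}>t\})-P(\Omega))^+$ exceeds a fixed multiple of its average over $(t_0,1)$ — possible since this excluded set occupies only part of the subinterval. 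Then $E_k:=\{u_{\epsilon_k}>t_k\}$ is a bounded smooth set, $E_k\Subset\Omega$ by the geometric claim since $t_k>t_0$, one has $|E_k\,\Delta\,\Omega|=|\{u_{\epsilon_k}\le t_k\}\cap\Omega|\le (1-t_k)^{-1}\|u_{\epsilon_k}-\chi_\Omega\|_{L^1}\to0$ because $1-t_k>\tfrac{1-t_0}{2}$, and $P(E_k)=P(\{u_{\epsilon_k}>t_k\})\le P(\Omega)+o(1)$, which combined with $\liminf_kP(E_k)\ge P(\Omega)$ yields $P(E_k)\to P(\Omega)$.

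The main obstacle is the geometric claim of the second paragraph: controlling $u_\epsilon$ at points of $\mathbb{R}^n\setminus\overline\Omega$ that lie close to $\partial\Omega$ — where the mollification can a priori be nearly $1$ — and doing so with a threshold $t_0$ that is \emph{independent of $\epsilon$}, so that it survives the limit $\epsilon\to0$. Everything after that (the coarea/Fatou extraction, Sard's theorem, lower semicontinuity) is careful but routine bookkeeping.
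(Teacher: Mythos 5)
Your proposal is correct, and it follows exactly the route the paper sketches (the paper does not reproduce a proof of this cited result from Chen--Torres--Ziemer, but its remark after the statement says the $E_k$ are superlevel sets $\{\chi_\Omega*\rho_\epsilon>t\}$ with $t\in(C(\delta,n),1)$, which is precisely your geometric claim). Your two-sided estimate at points of $\mathbb{R}^n\setminus\overline\Omega$ --- combining the ``mass outside the ball $B_d(x)$'' bound with the Lipschitz-propagation of the $1-\delta$ bound from $\partial\Omega$ --- is a clean way to make the threshold $t_0$ independent of $\epsilon$, and the coarea/Fatou/Chebyshev/Sard bookkeeping at the end is standard and sound.
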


The approximation sets $E_k$ above are actually constructed as superlevel sets of the mollified characteristic function of $\Omega$. The idea is that \eqref{density} implies that for $t \in (C(\delta,n), 1)$, $\{\chi_{\Omega} *\rho_{\epsilon}>t\} \Subset \Omega$ for $\epsilon>0$ small enough, where $\rho_{\epsilon}$ is the standard mollifier. See \cite{CTZ} for more details.

Next, in \cite[Theorem 1.1]{Schmidt}, Schmidt made the following observation: if 
\begin{align}
\label{xiang1}
    \mathscr{H}^{n-1}(\partial \Omega)=P(\Omega),
\end{align}then there exist smooth sets $E_k \Subset \Omega$ such that $E_k \rightarrow \Omega$ in $L^1$ and $P(E_k) \rightarrow P(\Omega)$. The proof utilizes the rectifiability of $\partial \Omega$, since \eqref{xiang1} implies $\partial \Omega$ is $\mathscr{H}^{n-1}$ equivalent to its reduced boundary, which is rectifiable due to De Giorgi's structure theorem.

We note that there are sets satisfying \eqref{density} but not satisfying \eqref{xiang1}. For example, in \cite{BGM09} the authors construct a pseudoconvex set such that its boundary has positive Lebesgure measure, and thus \eqref{xiang1} fails. However, it is known that all pseudoconvex sets satisfy \eqref{density}, see for example \cite[Corollary 7.17]{LT} for a proof of such property. On the other hand, piecewise Lipschitz domains with finite inward cusps satisfy \eqref{xiang1}, but \eqref{density} fails for such domains. Hence neither \eqref{density} nor \eqref{xiang1} is the necessary condition for domains having interior approximation sequence described in Question \ref{fund2}.

Another notable result related to smooth interior approximation is proved in \cite{CLT} by Chen, Torres and the third author of the paper as follows:
\begin{theorem}(\cite[Theorem 3.1]{CLT})
\label{linshi}
Let $\Omega$ be a bounded set of finite perimeter in $\mathbb{R}^n$. Then there exist smooth sets $E_k \Subset \Omega,$ such that $E_k \rightarrow \Omega$ in $L^1$ and $\sup_kP(E_k) <\infty$, if and only if 
\begin{align}
\label{neato}
    \mathscr{H}^{n-1}(\partial \Omega \setminus \Omega^0)<\infty,
\end{align}
\end{theorem}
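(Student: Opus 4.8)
The statement lies between the two main inequalities announced in the abstract, so the plan is to deduce it from \eqref{moregeneralapproximation} and \eqref{gap} after an elementary bookkeeping step on the boundary. First I would record the reduction of \eqref{neato} to a condition on the \emph{inner} boundary: every point of $\mathbb{R}^n$ lies in exactly one of $\Omega^1$, $\Omega^0$, $\partial^m\Omega$, so
\[
\partial\Omega\setminus\Omega^0=(\partial\Omega\cap\Omega^1)\sqcup(\partial\Omega\cap\partial^m\Omega),
\]
and since by Federer's theorem $\mathscr{H}^{n-1}(\partial^m\Omega\setminus\partial^*\Omega)=0$ and by De Giorgi's structure theorem $\mathscr{H}^{n-1}(\partial^*\Omega)=P(\Omega)$, we get $\mathscr{H}^{n-1}(\partial\Omega\cap\partial^m\Omega)\le\mathscr{H}^{n-1}(\partial^m\Omega)=P(\Omega)<\infty$. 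Hence \eqref{neato} holds if and only if $\mathscr{H}^{n-1}(\partial\Omega\cap\Omega^1)<\infty$, which is precisely the quantity controlled by \eqref{moregeneralapproximation} and \eqref{gap}.

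For the ``if'' direction, assuming $\mathscr{H}^{n-1}(\partial\Omega\cap\Omega^1)<\infty$, inequality \eqref{moregeneralapproximation} hands us smooth sets $E_k\Subset\Omega$ with $E_k\to\Omega$ in $L^1$ and $\limsup_k P(E_k)\le P(\Omega)+C_1(n)\mathscr{H}^{n-1}(\partial\Omega\cap\Omega^1)<\infty$; since each $E_k$ is bounded and smooth it has finite perimeter, so $\sup_k P(E_k)<\infty$, as required. If one prefers a proof independent of \eqref{moregeneralapproximation} (the route of \cite{CTZ,CLT}), the idea is to start from a standard interior approximation and excise a small-measure, bounded-perimeter neighborhood of the inner boundary: since $\mathscr{H}^{n-1}(\partial\Omega\cap\Omega^1)+\mathscr{H}^{n-1}(\partial\Omega\cap\partial^m\Omega)<\infty$ one may cover $\partial\Omega\setminus\Omega^0$ by balls $B_{r_i}(x_i)$ at a fine scale $\delta_m\to0$ with $\sum_i r_i^{n-1}$ uniformly bounded, whence $V_m:=\bigcup_i B_{r_i}(x_i)$ satisfies $|V_m|\lesssim\delta_m\sum_i r_i^{n-1}\to0$ while $P(V_m)\le n\omega_n\sum_i r_i^{n-1}$ stays bounded; then one takes $E_m$ to be a smooth set $L^1$-close to $\{\chi_\Omega*\rho_{\eps_m}>t_m\}\setminus\overline{V_m}$, where $t_m$ is a smooth level selected via the coarea formula so that $P(\{\chi_\Omega*\rho_{\eps_m}>t_m\})$ is bounded, and one keeps $E_m$ away from $\partial\Omega\cap\Omega^0$ (where $\chi_\Omega*\rho_{\eps_m}$ is small). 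This gives $E_m\Subset\Omega$, $E_m\to\Omega$ in $L^1$, and $\sup_m P(E_m)<\infty$.

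For the ``only if'' direction, let smooth $E_k\Subset\Omega$ satisfy $E_k\to\Omega$ in $L^1$ and $\sup_kP(E_k)=M<\infty$. Applying \eqref{gap} to this sequence, $M\ge\liminf_k P(E_k)\ge P(\Omega)+C_2(n)\mathscr{H}^{n-1}(\partial\Omega\cap\Omega^1)$, so $\mathscr{H}^{n-1}(\partial\Omega\cap\Omega^1)\le(M-P(\Omega))/C_2(n)<\infty$, and the first paragraph upgrades this to \eqref{neato}. A self-contained argument here amounts to proving \eqref{gap} for the (not necessarily smooth) sets $A_k:=\Omega\setminus E_k$, which satisfy $A_k\to\emptyset$ in $L^1$ and $P(A_k)\le P(\Omega)+M$: since $E_k\Subset\Omega$ its reduced boundary lies in $\Omega^1$ and agrees $\mathscr{H}^{n-1}$-a.e.\ with the internal interface $\partial^*E_k\cap\partial^*A_k$, and one wants to show, via weak-$*$ convergence of the perimeter measures and a density estimate, that $\mathscr{H}^{n-1}(\partial\Omega\cap\Omega^1)\le C(n)\liminf_k P(A_k)<\infty$.

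The density estimate in that last step --- equivalently, the core of \eqref{gap} --- is what I expect to be the main obstacle. A point of $\partial\Omega\cap\Omega^1$ may have no density-zero points of $\Omega$ nearby (the crack $S$ in Example \ref{modelexample} is of exactly this type), so one cannot obtain the required lower bound by applying the relative isoperimetric inequality in a ball; instead the estimate must exploit that the compactly contained sets $E_k$ are forced to leave a whole thin neighborhood of $\partial\Omega\cap\Omega^1$ empty, so the relevant comparison region is a thin slab around that set rather than a ball, and the inequality one really needs is a two-sided Minkowski-content type bound $P(A_k)\gtrsim_n \mathscr{H}^{n-1}(\partial\Omega\cap\Omega^1)$ that does not presuppose rectifiability. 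Granting \eqref{moregeneralapproximation} and \eqref{gap}, however, Theorem \ref{linshi} is an immediate corollary.
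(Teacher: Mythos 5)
Your primary argument — deducing Theorem \ref{linshi} as an immediate corollary of the two main estimates \eqref{moregeneralapproximation} and \eqref{gap}, after using Federer's theorem and $\mathscr{H}^{n-1}(\partial^*\Omega)=P(\Omega)<\infty$ to identify \eqref{neato} with $\mathscr{H}^{n-1}(\partial\Omega\cap\Omega^1)<\infty$ — is precisely the remark the paper itself makes about how Theorems \ref{chengxinxingyi} and \ref{nec} subsume Theorem \ref{linshi}, so the proof is correct and essentially identical. The appended self-contained sketches are dispensable for this corollary; the one point you get slightly wrong there is that the paper's proof of \eqref{wa} \emph{does} succeed with the relative isoperimetric inequality in balls, by applying it not to $\Omega$ but to $\Omega\setminus E_i$ at a radius $r_x$ chosen (by continuity from density $1$ at small $r$ to density $0$ at large $r$) so that $|(\Omega\setminus E_i)\cap B_{r_x}(x)|/|B_{r_x}(x)|\in(1/3,1/2)$.
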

We also remark that for $\Omega$ being a set of finite perimeter, \eqref{neato} is equivalent to \begin{align}
\label{finite}
    \mathscr{H}^{n-1}(\partial \Omega \cap \Omega^1)<\infty
\end{align}due to Federer's Theorem, see Theorem \ref{Fed} in the next section.

The importance of Theorem \ref{linshi} is that it is the first result to characterize domains which can be approximated by smooth sets of uniformly bounded perimeters compactly contained within. It is used to prove that domains satisfying \eqref{neato} are extension domains for bounded divergence measure fields, thus generalized Gauss-Green formula for bounded divergence measure fields can be established in such domains. The proof of the existence of the approximating sequence of sets under the hypothesis \eqref{neato} is based on a fine covering of $\partial \Omega \setminus \Omega^0$.

\hfill\\

\subsection{Our Results}
In this paper, we first generalize the results in \cite{Schmidt}. Instead of assuming condition \eqref{xiang1}, which is by Theorem \ref{Fed} equivalent to
\begin{align}
    \mathscr{H}^{n-1}(\partial \Omega \cap \Omega^1)+\mathscr{H}^{n-1}(\partial \Omega \cap \Omega^0)=0,
\end{align}to construct the desirable smooth interior approximation sequence strictly from inside, we only require that the set of the measure-theoretic interior points of $\Omega$ on $\partial \Omega$ is $\mathscr{H}^{n-1}$-negligible, that is, \begin{align}
\label{char}
    \mathscr{H}^{n-1}(\partial \Omega \cap \Omega^1)=0.
\end{align}We have the following proposition.
\begin{proposition}
\label{sufw}
Let $\Omega$ be a bounded set of finite perimeter in $\mathbb{R}^n$. If \eqref{char} is satisfied, then there exist smooth sets $E_k \Subset \Omega$ such that $E_k \rightarrow \Omega$ in $L^1$ and $P(E_k) \rightarrow P(\Omega)$. 
\end{proposition}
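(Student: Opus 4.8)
\emph{Plan and reduction.} Since $E_k\Subset\Omega$ and $E_k\to\Omega$ in $L^1$, lower semicontinuity of the perimeter already gives $\liminf_k P(E_k)\ge P(\Omega)$, so it is enough to produce, for every $\ep>0$, a set $F$ of finite perimeter with $F\Subset\Omega$, $|F\,\triangle\,\Omega|<\ep$ and $P(F)<P(\Omega)+\ep$, and then to diagonalise. Granting such an $F$, its closure $\overline F$ is a compact subset of the open set $\Omega$, so for $\sigma<\dist(\overline F,\partial\Omega)$ the mollified superlevel sets $\{\chi_F*\rho_\sigma>s\}$ lie $\Subset\Omega$, converge to $F$ in $L^1$, and — by the coarea formula together with $\|\nabla(\chi_F*\rho_\sigma)\|_{L^1}\to P(F)$ and lower semicontinuity — admit a choice of regular value $s$ with perimeter within $\ep$ of $P(F)$; this is the interior form of \cite[Theorem 13.8]{Maggi}. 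Here and below $\rho_\sigma$ is a standard radial mollifier and we fix an open representative of $\Omega$.

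\emph{Construction of $F$.} Mollify the set itself: put $u_\sigma:=\chi_\Omega*\rho_\sigma$ and $G_{\sigma,t}:=\{u_\sigma>t\}$. By the coarea formula $\int_0^1 P(G_{\sigma,t})\,dt=\|\nabla u_\sigma\|_{L^1}\le P(\Omega)$, with $\|\nabla u_\sigma\|_{L^1}\to P(\Omega)$ as $\sigma\downarrow0$; since moreover $G_{\sigma,t}\to\Omega^1$ in $L^1$ for every $t\in(0,1)$, lower semicontinuity of the perimeter together with Fatou's lemma force $\liminf_{\sigma\downarrow0}P(G_{\sigma,t})=P(\Omega)$ for a.e. $t$. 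Fix such a $t_0\in(\tfrac12,1)$ and a sequence $\sigma_j\downarrow0$ with $P(G_{\sigma_j,t_0})\to P(\Omega)$, and set $G_j:=G_{\sigma_j,t_0}$, so $G_j\to\Omega$ in $L^1$ and $P(G_j)\to P(\Omega)$. The only obstruction to $G_j\Subset\Omega$ is the closed bounded set
\[
\mathcal Z_j\ :=\ \{u_{\sigma_j}\ge t_0\}\setminus\Omega\ \supseteq\ \overline{G_j}\setminus\Omega,
\]
consisting of points outside $\Omega$ near which $\Omega$ is ``locally thick at scale $\sigma_j$''. If $U_j$ is an open set with $\mathcal Z_j\subseteq U_j$, then $F:=G_j\setminus\overline{U_j}$ satisfies $\overline F\subseteq\overline{G_j}\setminus U_j\subseteq\Omega$, hence $F\Subset\Omega$, together with $P(F)\le P(G_j)+P(U_j)$ and $|F\,\triangle\,\Omega|\le|G_j\,\triangle\,\Omega|+|U_j|$. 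Thus everything reduces to choosing, along a suitable subsequence, an open $U_j\supseteq\mathcal Z_j$ which is a countable union of balls $B_{r_i}(x_i)$ with $r_i\le1$ and $\sum_i r_i^{\,n-1}<\ep$ (then both $P(U_j)\le n\omega_n\ep$ and $|U_j|\le\omega_n\ep$).

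\emph{The main obstacle.} It therefore suffices to show that the $(n-1)$-dimensional Hausdorff content $\mathscr{H}^{n-1}_\infty(\mathcal Z_j)$ tends to $0$ along a subsequence. Let $\Gamma:=\partial\Omega\setminus(\Omega^0\cup\partial^*\Omega)$. By Federer's theorem (Theorem~\ref{Fed}), $\Gamma$ differs from $\partial\Omega\cap\Omega^1$ by an $\mathscr{H}^{n-1}$-null set, so the hypothesis \eqref{char} gives $\mathscr{H}^{n-1}(\Gamma)=0$; cover $\Gamma$ (together with the $\mathscr{H}^{n-1}$-null set of points where $\Omega$ has density $\notin\{0,\tfrac12,1\}$) by a fixed countable union $V$ of balls of total $(n-1)$-content $<\ep$, so $P(V)<C(n)\ep$. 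Because $t_0>\tfrac12$ and $u_{\sigma_j}(x)\to\theta_\Omega(x)$ at every point where the density $\theta_\Omega(x)$ exists, any $x$ lying in $\mathcal Z_j\setminus V$ for infinitely many $j$ would satisfy $\theta_\Omega(x)\ge t_0>\tfrac12$ with $x\notin\Omega$, hence $x\in\Omega^1\setminus\Omega\subseteq\partial\Omega\cap\Omega^1\subseteq\Gamma\subseteq V$, a contradiction; so $\mathcal Z_j\setminus V$ shrinks pointwise to the empty set. The delicate point — and the technical heart of the proof — is to upgrade this pointwise shrinking to $\mathscr{H}^{n-1}_\infty(\mathcal Z_j\setminus V)\to0$ along a subsequence. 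The expected mechanism is that the transient part of $\mathcal Z_j$ sits near $\partial^*\Omega$ (where, since blow-ups of $\Omega$ are half-spaces, for $t_0$ close to $1$ there are asymptotically no ``thick'' complement points) and near $\partial\Omega\cap\Omega^0$ (where $\Omega$ is Lebesgue-thin), and that a subset of positive $(n-1)$-content which $G_j$ erroneously ``fills in'' would be incompatible with the perimeter convergence $P(G_j)\to P(\Omega)$. Making this quantitative should combine a Besicovitch covering at scale $\sigma_j$ — with the ball count controlled by $|D\chi_\Omega|$ on a shrinking neighbourhood of $\partial^*\Omega$ for the rectifiable part, and by the volume of $\Omega$ on a shrinking neighbourhood of $\partial\Omega\cap\Omega^0$ for the density-zero part — with the relative isoperimetric inequality; it is precisely here that the structure theory for sets of finite perimeter and the hypothesis \eqref{char} enter in an essential way. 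Once $U_j:=V\cup(\text{such a small-content ball cover of }\mathcal Z_j\setminus V)$ is available, the three estimates of the previous paragraph close the argument.
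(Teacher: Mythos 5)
Your plan is sound up to the reduction: lower semicontinuity handles one inequality, the mollified-superlevel-set reduction to a (merely) finite-perimeter set $F\Subset\Omega$ with $P(F)<P(\Omega)+\ep$ is standard, and the algebra showing $F:=G_j\setminus\overline{U_j}\Subset\Omega$ with $P(F)\le P(G_j)+P(U_j)$ is correct. But the proof stalls precisely at what you call ``the technical heart'': you never actually prove that $\mathscr H^{n-1}_\infty(\mathcal Z_j\setminus V)\to0$. Pointwise shrinking of $\mathcal Z_j\setminus V$ to $\emptyset$ is cheap and does not control $(n-1)$-content; the last paragraph correctly identifies the ingredients one would need (Besicovitch covering, relative isoperimetric inequality, control of $|D\chi_\Omega|$ near $\partial^*\Omega$ and of volume near $\partial\Omega\cap\Omega^0$), but states them as ``the expected mechanism'' and ``should combine'' rather than carrying out the estimate. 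The informal argument ``a subset of positive $(n-1)$-content which $G_j$ erroneously fills in would be incompatible with $P(G_j)\to P(\Omega)$'' is not a proof: a set of small Lebesgue measure can still have large $(n-1)$-content, and perimeter convergence by itself does not preclude $G_j$ from absorbing a thin slab outside $\Omega$ at scale $\sigma_j$. So as written there is a genuine gap, and it is the step that \emph{is} the theorem.

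It is also worth noting that the paper sidesteps the mollification route entirely, which is what makes the argument closeable. The paper's proof of Theorem~\ref{chengxinxingyi} (of which Proposition~\ref{sufw} is a corollary) constructs $E_\epsilon=\Omega\setminus\mathcal Q'_\epsilon$ by directly covering the three strata of $\partial\Omega$ with tailored pieces: flat cylinders \`a la Schmidt near $\partial^*\Omega$ whose relative perimeter in $\Omega$ is $(1+C\ep)P(\Omega)$; small balls near $\partial\Omega\cap\Omega^0$ chosen inside a small-perimeter open set $U$ so that, via the divergence theorem and the isoperimetric inequality, $\mathscr H^{n-1}(\partial B_r(x)\cap\Omega^1)\lesssim_n P(\Omega;B_r(x))$ and hence the balls' total relative perimeter in $\Omega^1$ is $O(\ep)$; and small balls of total content $\lesssim_n\mathscr H^{n-1}(\partial\Omega\cap\Omega^1)+\ep$ over the remaining $\mathscr H^{n-1}$-equivalent piece $\partial\Omega\cap\Omega^1$. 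Compactness of $\partial\Omega$ then gives a finite subcover. This avoids having to analyse the exceptional set $\mathcal Z_j$ of a mollification at all, which is exactly where your approach gets stuck: in $\mathcal Z_j$ the contributions of $\partial^*\Omega$, $\partial\Omega\cap\Omega^0$ and $\partial\Omega\cap\Omega^1$ are entangled at scale $\sigma_j$, whereas the paper's stratified cover lets each piece be handled with the estimate adapted to it. If you want to salvage your route, the missing covering argument for $\mathcal Z_j\setminus V$ is essentially a repackaging of the paper's ball estimates; you would have to prove, quantitatively, that near $\partial^*\Omega$ the inclusion $\mathcal Z_j\subset\{x:\dist(x,\partial^*\Omega)\le\sigma_j\}$ has content $\lesssim P(\Omega)\cdot o(1)$, and near $\partial\Omega\cap\Omega^0$ use the isoperimetric bound $r^{n-1}\lesssim P(\Omega;B_r(x))$ on any ball where $\{u_{\sigma_j}\ge t_0\}$ reaches — and these are precisely the inequalities the paper uses, just applied to the cover rather than to $\mathcal Z_j$.
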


One can see that \eqref{char} is equivalent to\begin{align}
\label{zaozuo}
    \limsup_{r \rightarrow 0} \frac{|B_r(x) \cap \Omega|}{|B_r(x)|}< 1, \quad \mathscr{H}^{n-1}-a.e.\, x \in \partial \Omega.
\end{align}This weakens the condition \eqref{density} by removing the uniform constant gap $\delta$ and by removing the uniform distance $r_0$ near $\partial \Omega$, and hence Proposition \ref{sufw} also generalizes Theorem \ref{dens}.

To prove Proposition \ref{sufw}, it suffices to prove the following more general theorem from which Proposition \ref{sufw} follows easily.

\begin{theorem}
\label{chengxinxingyi}
For any $\Omega$ being a bounded set of finite perimeter in $\mathbb{R}^n$, we can choose a sequence of smooth sets $E_i \Subset \Omega$ such that $E_i \rightarrow \Omega$ in $L^1$ and \begin{align}
    \label{moregeneralapproximation}
\limsup_{i \rightarrow \infty} P(E_i) \le P(\Omega)+C(n) \mathscr{H}^{n-1}(\partial \Omega \cap \Omega^1).    
\end{align}
\end{theorem}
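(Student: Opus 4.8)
The plan is to build the approximating sets $E_i$ by mollification, but applied to a cleverly chosen \emph{open} set rather than to $\Omega$ itself. The obstruction in Example~\ref{modelexample} is that $\chi_\Omega * \rho_\epsilon$ is close to $1$ near a point $x \in \partial\Omega \cap \Omega^1$, so its superlevel sets $\{\chi_\Omega*\rho_\epsilon > t\}$ cannot avoid a neighborhood of $x$; the perimeter penalty we pay for fencing these bad points off from the boundary is exactly what the right-hand side of \eqref{moregeneralapproximation} accounts for. So first I would fix a small parameter and, using that $\mathscr{H}^{n-1}(\partial\Omega \cap \Omega^1) < \infty$ is the only case that needs work (otherwise \eqref{moregeneralapproximation} is vacuous), cover $\partial\Omega \cap \Omega^1$ efficiently by balls $\{B_{r_j}(x_j)\}$ with $\sum_j r_j^{n-1} \le \mathscr{H}^{n-1}(\partial\Omega \cap \Omega^1) + \delta$ and $\sum_j r_j^n$ as small as we like (possible since the set is $\mathscr{H}^{n-1}$-finite, hence Lebesgue-null). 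Let $U := \Omega \setminus \overline{\bigcup_j B_{r_j}(x_j)}$, an open set with $|U \triangle \Omega|$ small and, crucially, with $\overline{U} \cap (\partial\Omega \cap \Omega^1) = \emptyset$.

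Next I would estimate $P(U)$. We have $\partial^* U \subset \partial^*\Omega \cup \bigcup_j \partial B_{r_j}(x_j)$ up to $\mathscr{H}^{n-1}$-null sets, and the balls contribute at most $\sum_j \mathscr{H}^{n-1}(\partial B_{r_j}(x_j)) = n\omega_n \sum_j r_j^{n-1} \le n\omega_n(\mathscr{H}^{n-1}(\partial\Omega \cap \Omega^1) + \delta)$, while the part coming from $\partial^*\Omega$ is bounded by $P(\Omega)$. This already gives $P(U) \le P(\Omega) + C(n)\mathscr{H}^{n-1}(\partial\Omega \cap \Omega^1) + C(n)\delta$ with $C(n) = n\omega_n$. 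The key gain is that $U$ now has the property that $\mathscr{H}^{n-1}(\partial U \cap U^1)$ is controlled — in fact $\overline{U}$ avoids the bad set of $\Omega$, and the newly created boundary pieces $\partial B_{r_j}$ are smooth, so $U$ is, up to a harmless smoothing of the finitely many spherical caps, essentially a set whose topological and measure-theoretic boundaries agree $\mathscr{H}^{n-1}$-a.e. (this is where the hypothesis \eqref{char}-type structure is recovered for $U$, away from $\Omega$'s defects).

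Then I would apply a known smooth-interior-approximation result to $U$: since $\overline{U} \subset \Omega$ is compact and $U$ has no measure-theoretic interior points on its topological boundary except on the smooth spheres (which can be handled directly, e.g. by first shrinking each ball slightly), the mollified superlevel sets $E_i := \{\chi_U * \rho_{\epsilon_i} > 1/2\}$ satisfy $E_i \Subset U \Subset \Omega$, $E_i \to U$ in $L^1$, and $\limsup_i P(E_i) \le P(U)$ by the standard coarea/Sard argument (the superlevel sets are smooth for a.e.\ level, are compactly contained once $\epsilon_i$ is small because $\overline{U}$ is compact and at distance $\ge \epsilon_i$ from its own complement's bulk, and $P$ passes to the limit since $\int_0^1 P(\{\chi_U*\rho_\epsilon > t\})\,dt = \|D(\chi_U * \rho_\epsilon)\|(\R^n) \to P(U)$). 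Combining, $\limsup_i P(E_i) \le P(\Omega) + C(n)\mathscr{H}^{n-1}(\partial\Omega \cap \Omega^1) + C(n)\delta$, and since $|E_i \triangle \Omega| \le |E_i \triangle U| + |U \triangle \Omega|$ can be made small, a diagonal argument over $\delta \to 0$ and $\epsilon_i \to 0$ produces the desired sequence.

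The main obstacle I anticipate is the last step done \emph{uniformly}: I need $E_i \Subset \Omega$ (strict compact containment, not merely $E_i \Subset U$), which holds since $\overline{U} \subset \Omega$ is compact, but I must also ensure the perimeter bound survives the diagonalization — i.e.\ that choosing the covering (depending on $\delta$) and then mollifying (depending on $\epsilon$) can be interleaved so that a single sequence achieves $\limsup_i P(E_i) \le P(\Omega) + C(n)\mathscr{H}^{n-1}(\partial\Omega \cap \Omega^1)$ with \emph{no} $\delta$ error. A secondary technical point is verifying that the coarea formula for $\chi_U * \rho_\epsilon$ genuinely yields smooth superlevel sets compactly inside $U$ for a cofinal set of levels near $1/2$; this is routine via Sard's theorem but requires that we control the gradient near $\partial U$, which is where the explicit structure of $U$ (finitely many smooth spherical pieces plus $\partial^*\Omega$) is used rather than any fine property of $\Omega$.
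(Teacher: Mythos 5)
Your proposal has the right picture of \emph{where} the extra perimeter comes from, and your estimate $P(U) \le P(\Omega) + C(n)\,\mathscr{H}^{n-1}(\partial\Omega\cap\Omega^1) + C(n)\delta$ for $U := \Omega\setminus\overline{\bigcup_j B_{r_j}(x_j)}$ is correct. But the final step --- passing from $U$ to a compactly contained smooth sequence by mollification --- has two genuine gaps, and the second of them is precisely the difficulty the paper's proof is designed to overcome.

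First, the claim that $\overline{U}\subset\Omega$ is false. You only removed a neighborhood of $\partial\Omega\cap\Omega^1$, so $\overline{U}$ still contains all of $\partial^*\Omega$ and $\partial\Omega\cap\Omega^0$ outside the finitely many (or countably many) balls, and none of those points belong to $\Omega$ (take $\Omega$ open, say $\Omega=B_1$ with no balls to remove: then $U=\Omega$ and $\overline{U}=\overline{B_1}\not\subset B_1$). So $U\Subset\Omega$ fails, and even if the mollified sets were compactly contained in $U$, you would not automatically get $E_i\Subset\Omega$. Second, and more seriously, $\{\chi_U*\rho_\epsilon>1/2\}\Subset U$ is not justified. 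You correctly observe that $\partial U\cap U^1=\emptyset$ (a strong form of condition \eqref{char} for $U$), but the mollification argument needs the \emph{uniform} density bound \eqref{density} of Theorem \ref{dens}, not the pointwise statement \eqref{char}. At a point $x\in\partial U$ with $x\in\partial^m\Omega$, the density of $U$ in $B_r(x)$ is only known to avoid the limit value $1$; along a sequence $r_k\to 0$ it may still be arbitrarily close to $1$, so for the corresponding $\epsilon$ one can have $\chi_U*\rho_\epsilon(x)>1/2$ while $x\notin U$. The whole point of Proposition \ref{sufw} being a strict improvement over Theorem \ref{dens} is that passing from \eqref{density} to \eqref{char} requires a method other than straight mollification; invoking Proposition \ref{sufw} for $U$ would be circular, since it is a consequence of the very theorem you are proving.

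The paper avoids mollification entirely: it covers \emph{all} of $\partial\Omega$ by a finite family of open sets and takes $E_\epsilon=\Omega\setminus\mathcal{Q}'_\epsilon$, which is then trivially compactly contained in $\Omega$. The covering is done in three pieces with three different tools. The piece $\partial\Omega\cap\Omega^1$ is handled exactly as you do, by balls of controlled total $(n-1)$-content. The piece $\partial\Omega\cap\Omega^0$ is covered by balls chosen, via a divergence-theorem estimate at density-zero points and the relative isoperimetric inequality, so that $\mathscr{H}^{n-1}(\partial B_{r_x}(x)\cap\Omega^1)\lesssim_n P(\Omega;B_{r_x}(x))$; since $P(\Omega;\cdot)$ gives zero mass to $\partial\Omega\cap\Omega^0$, the total contribution can be made $O(\epsilon)$. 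Finally --- and this is the step you would be forced to confront if you tried to repair your proof by also covering $\partial^*\Omega$ with balls --- the piece $\partial^*\Omega$ must be covered by Schmidt's \emph{flat cylinders}, not balls, because balls would contribute $\approx C(n)\,\mathscr{H}^{n-1}(\partial^*\Omega)=C(n)P(\Omega)$ and spoil the coefficient $1$ in front of $P(\Omega)$ in \eqref{moregeneralapproximation}. The cylinders, chosen via De Giorgi's structure theorem, contribute only $(1+C(n)\epsilon)P(\Omega)$. So your strategy would become essentially the paper's once the mollification step is replaced by covering the remaining two boundary pieces and removing the finite subcover, with the flat-cylinder device being the non-obvious ingredient.
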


The proof of Theorem \ref{chengxinxingyi} uses some argument in \cite{Schmidt} and also partially utilizes the idea in the proof of Theorem \ref{linshi} in \cite{CLT}, but refines the analysis of perimeters inside $\Omega$ of the covering balls centered at $\partial \Omega \cap \Omega^0$.
\hfill\\

Next, we also prove that the condition \eqref{char} is sharp, by showing that it is also the necessary condition for a bounded set $\Omega$ to have an interior approximation sequence in the sense of Question \ref{fund2}. In fact, we can prove the following stronger result, which estimates how much gap between the limit of perimeters of any $L^1$ approximation sequence compactly contained in $\Omega$ and $P(\Omega)$ could be. 
\begin{theorem}
\label{nec}
Let $\Omega$ be a bounded set of finite perimeter. For any $E_i \Subset \Omega$ with $E_i \rightarrow \Omega$ in $L^1$, we have
\begin{align}
\label{wa}
   \liminf_{i \rightarrow \infty} P(E_i) \ge P(\Omega)+C(n)\mathscr{H}^{n-1}(\partial \Omega \cap \Omega^1),
\end{align}where $C(n)$ is a constant depending only on dimension $n$.
\end{theorem}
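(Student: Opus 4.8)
The plan is to localize the lower-semicontinuity argument near the set $\partial\Omega\cap\Omega^1$. Fix $\ep>0$. Since $\mathscr{H}^{n-1}\res(\partial\Omega\cap\Omega^1)$ is a finite Radon measure (by Theorem \ref{linshi}/Federer we may as well assume the right-hand side is finite, else there is nothing to prove after truncation), and since $\partial\Omega\cap\Omega^1$ is disjoint from the reduced boundary $\partial^*\Omega$ modulo $\mathscr{H}^{n-1}$-null sets, we can choose a relatively open set $A\subset\mathbb{R}^n$ with $\partial\Omega\cap\Omega^1\subset A$, $\mathscr{H}^{n-1}(\partial^*\Omega\cap A)<\ep$, and $|A|<\ep$. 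The key geometric fact is that a point $x\in\partial\Omega\cap\Omega^1$ has Lebesgue density $1$ for $\Omega$, so for small $r$ the ball $B_r(x)$ is almost entirely filled by $\Omega$; since $E_i\Subset\Omega$ with $E_i\to\Omega$ in $L^1$, near such $x$ the sets $E_i$ must develop boundary in order to ``reach'' density $1$ from inside, and this forced boundary is what produces the extra term.

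The main step is a quantitative local estimate. For $x\in\partial\Omega\cap\Omega^1$ pick via the density-$1$ property a radius $r_x$ with $|B_{r_x}(x)\setminus\Omega|\le \theta_n |B_{r_x}(x)|$ for a small dimensional $\theta_n$, and with $\mathscr{H}^{n-1}(\partial^*\Omega\cap \partial B_{r_x}(x))=0$; by Vitali's covering lemma extract a countable disjoint subfamily $\{B_{r_j}(x_j)\}$ whose $5$-times dilates cover $\partial\Omega\cap\Omega^1$, all contained in $A$. On each $B_j:=B_{r_j}(x_j)$, for $i$ large we have $|B_j\setminus E_i|$ close to $|B_j\setminus\Omega|$, hence $E_i$ occupies more than, say, half of $B_j$ while $E_i\Subset\Omega$ forces $\partial E_i$ to stay away from $\Omega^0$; comparing $E_i\cap B_j$ with $\emptyset$ (or with $B_j$) via the relative isoperimetric inequality in the ball gives a lower bound
\begin{align}
\label{locbound}
P(E_i;B_j)\ \ge\ c_n\, r_j^{n-1}\ \ge\ c_n'\,\mathscr{H}^{n-1}\!\big((\partial\Omega\cap\Omega^1)\cap 5B_j\big),
\end{align}
where the last inequality uses the upper $(n-1)$-density bound for $\mathscr{H}^{n-1}$ on the covering. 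Summing \eqref{locbound} over the disjoint family and using that the $5B_j$ cover $\partial\Omega\cap\Omega^1$ yields $\sum_j P(E_i;B_j)\ge C(n)\,\mathscr{H}^{n-1}(\partial\Omega\cap\Omega^1)$ for all large $i$, after absorbing the $\ep$-errors.

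To finish, I would split $P(E_i)=P(E_i;A)+P(E_i;\mathbb{R}^n\setminus\overline A)$ (choosing $A$ so the boundary contribution on $\partial A$ vanishes, or working with a slightly larger open set). On $\mathbb{R}^n\setminus\overline A$ the sets $E_i$ converge in $L^1$ to $\Omega$ and $\partial^*\Omega\cap A$ has small measure, so lower semicontinuity of perimeter gives $\liminf_i P(E_i;\mathbb{R}^n\setminus\overline A)\ge P(\Omega;\mathbb{R}^n\setminus\overline A)\ge P(\Omega)-\ep$. On $A$, since the disjoint balls $B_j\subset A$, we have $P(E_i;A)\ge\sum_j P(E_i;B_j)\ge C(n)\mathscr{H}^{n-1}(\partial\Omega\cap\Omega^1)-\ep$ for $i$ large. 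Adding the two and letting $\ep\to0$ gives \eqref{wa}. The hard part is the local estimate \eqref{locbound}: one must verify that the radii $r_j$ can be taken so that on $5B_j$ the forced boundary of $E_i$ is genuinely comparable to $r_j^{n-1}$ and at the same time comparable to the $\mathscr{H}^{n-1}$-mass of $\partial\Omega\cap\Omega^1$ there — this requires combining the density-$1$ condition (which controls $|B_j\setminus\Omega|$, hence eventually $|B_j\setminus E_i|$, from above) with an upper density bound for $\mathscr{H}^{n-1}$ valid at $\mathscr{H}^{n-1}$-a.e.\ point of the covered set, so that the Vitali family is not wasteful; getting a clean universal constant $C(n)$ out of this balancing act, uniformly in $i$, is the crux.
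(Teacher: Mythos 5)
Your overall strategy — split $P(E_i)$ into a contribution near $\partial\Omega\cap\Omega^1$ and a contribution on the complement of a small neighbourhood, use lower semicontinuity on the latter and a relative isoperimetric estimate on the former — is exactly the paper's plan, and the Vitali/Besicovitch bookkeeping and the $\mathscr{H}^{n-1}$-density bound you invoke at the end are fine. However, the key local estimate \eqref{locbound} has a genuine gap, coming from the order of quantifiers.

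You first fix the radii $r_j$ (so that $|B_j\setminus\Omega|\le\theta_n|B_j|$) and then let $i\to\infty$. But with $B_j$ fixed and $E_i\to\Omega$ in $L^1$, one has $|B_j\setminus E_i|\to|B_j\setminus\Omega|$, and since $E_i$ occupies more than half of $B_j$, the relative isoperimetric inequality only yields
$P(E_i;B_j)\gtrsim_n|B_j\setminus E_i|^{(n-1)/n}\to|B_j\setminus\Omega|^{(n-1)/n}$,
which is \emph{not} comparable to $r_j^{n-1}$: by your choice this quantity is at most $\theta_n^{(n-1)/n}r_j^{n-1}$, and it can in fact be $0$. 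This is not a borderline worry: in the motivating example (square minus a slit), every $x_j$ on the slit lies in the topological interior of $\overline\Omega$, so $|B_{r}(x_j)\setminus\Omega|=0$ for all small $r$, and your bound degenerates completely. So \eqref{locbound} is false as stated, and no choice of $\theta_n$ fixes it.

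The repair requires choosing the radii \emph{after} fixing $i$, at the critical scale where $|(\Omega\setminus E_i)\cap B_r(x)|/|B_r(x)|$ passes through $1/2$. The point (which you gesture at but do not exploit) is that $x\in\Omega^1$ and $x\notin\overline{E_i}$ force $\Omega\setminus E_i$ to have Lebesgue density $1$ at $x$, while for large $r$ the same ratio tends to $0$ because $\Omega$ is bounded; continuity then yields, for each $x$ and each large $i$, some $r_{x,i}$ with the ratio in, say, $(1/3,1/2)$, and only at this scale does the relative isoperimetric inequality give $P(\Omega\setminus E_i;B_{r_{x,i}}(x))\gtrsim_n r_{x,i}^{n-1}$. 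You then need to subtract the contribution of $P(\Omega;B_{r_{x,i}}(x))$ to pass from $P(\Omega\setminus E_i)$ to $P(E_i)$; this is where one uses that $\mathscr{H}^{n-1}$-a.e.\ point of $\partial\Omega\cap\Omega^1$ has zero $(n-1)$-density for $\|D\chi_\Omega\|$, upgraded by Egorov to a uniform smallness $P(\Omega;B_r(x))<\epsilon r^{n-1}$ on a compact subset $K$, and one also needs $|\Omega\setminus E_i|$ small enough to force $r_{x,i}<\delta$. Finally the lower bound $\sum_j\omega_{n-1}r_{j}^{n-1}\ge\mathscr{H}^{n-1}(K)-\epsilon$ comes most cleanly from the definition of Hausdorff measure (radii below $\delta$), rather than the upper-density route you sketch, though that would also work. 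Once the local estimate is fixed in this way, the rest of your outline goes through essentially as in the paper.
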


Clearly if $E_i \Subset \Omega$, $E_i \rightarrow \Omega$ in $L^1$ and $P(E_i)\rightarrow P(\Omega)$ then \eqref{wa} automatically implies \eqref{char}. 

Hence the combination of Proposition \ref{sufw} and Theorem \ref{nec} gives the following characterization result for sets of finite perimeters admitting strict interior approximation sequence described in Question \ref{fund2}:
\begin{theorem}
\label{suffnece}
Let $\Omega$ be a bounded set of finite perimeter, then \eqref{char} holds if and only if there exist a sequence of smooth sets $E_k$ compactly contained in $\Omega$ such that $E_k \rightarrow \Omega$ in $L^1$ and $P(E_k) \rightarrow P(\Omega)$ as $k \rightarrow \infty$.
\end{theorem}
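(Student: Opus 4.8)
The plan is to obtain Theorem~\ref{suffnece} as the conjunction of the two main results already at our disposal: Proposition~\ref{sufw} supplies the sufficiency of \eqref{char}, and Theorem~\ref{nec} supplies its necessity. Once those are in hand, no further construction is needed, so I would simply assemble the two implications and check the (elementary) deduction in the second one.

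For the ``if'' direction I would invoke Proposition~\ref{sufw} verbatim: assuming $\mathscr{H}^{n-1}(\partial\Omega\cap\Omega^1)=0$, it produces smooth sets $E_k\Subset\Omega$ with $E_k\to\Omega$ in $L^1$ and $P(E_k)\to P(\Omega)$. (Recall that Proposition~\ref{sufw} is itself read off from Theorem~\ref{chengxinxingyi}: under \eqref{char} the right-hand side of \eqref{moregeneralapproximation} is just $P(\Omega)$, so $\limsup_i P(E_i)\le P(\Omega)$, while lower semicontinuity of perimeter under $L^1$ convergence gives $\liminf_i P(E_i)\ge P(\Omega)$, whence $P(E_i)\to P(\Omega)$.)

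For the ``only if'' direction, suppose a sequence of smooth sets $E_k\Subset\Omega$ with $E_k\to\Omega$ in $L^1$ and $P(E_k)\to P(\Omega)$ exists. Applying Theorem~\ref{nec} to this sequence and using $\lim_k P(E_k)=\liminf_k P(E_k)=P(\Omega)$ yields
\[
P(\Omega)\ \ge\ P(\Omega)+C(n)\,\mathscr{H}^{n-1}(\partial\Omega\cap\Omega^1),
\]
with $C(n)$ the positive dimensional constant of Theorem~\ref{nec}. Since $P(\Omega)<\infty$ we may subtract it to get $C(n)\,\mathscr{H}^{n-1}(\partial\Omega\cap\Omega^1)\le 0$, hence $\mathscr{H}^{n-1}(\partial\Omega\cap\Omega^1)=0$, which is exactly \eqref{char}. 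This is the only new step and it is immediate.

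Consequently, within the logical scope of this statement there is essentially no obstacle; the difficulty is entirely in the two ingredients. Were one forced to prove them from scratch, the hard parts would be: (i) in Theorem~\ref{chengxinxingyi}, constructing $E_i\Subset\Omega$ whose perimeter overshoot is controlled by precisely a dimensional multiple of $\mathscr{H}^{n-1}(\partial\Omega\cap\Omega^1)$, which calls for a fine Besicovitch/Vitali-type covering of $\partial\Omega\cap\Omega^0$ by small balls, replacement of $\Omega$ near its boundary by such balls (or by superlevel sets of a mollification of $\chi_\Omega$), and a delicate accounting of the perimeter contributed \emph{inside} $\Omega$ by the covering balls centered on $\partial\Omega\cap\Omega^0$; and (ii) in Theorem~\ref{nec}, the quantitative lower bound, where compact containment $E_k\Subset\Omega$ forces $E_k$ to detach from the density-one points of $\Omega$ lying on $\partial\Omega$, and a relative isoperimetric inequality on small balls centered at $\mathscr{H}^{n-1}$-a.e.\ point of $\partial\Omega\cap\Omega^1$ converts that detachment into the extra perimeter $C(n)\,\mathscr{H}^{n-1}(\partial\Omega\cap\Omega^1)$ in the limit.
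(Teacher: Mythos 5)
Your proposal matches the paper's own argument exactly: the paper derives Theorem~\ref{suffnece} by citing Proposition~\ref{sufw} for sufficiency and observing (just before the theorem statement) that \eqref{wa} from Theorem~\ref{nec} immediately forces \eqref{char} once $P(E_i)\to P(\Omega)$. Your filling-in of the elementary deduction in the ``only if'' direction, and of how Proposition~\ref{sufw} is read off from Theorem~\ref{chengxinxingyi} via lower semicontinuity, is correct and is what the authors leave implicit.
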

\hfill\\

\subsection*{Some more remarks}
We first remark that we can also consider strict exterior approximation of sets of finite perimeter, and similar results hold. Indeed, for bounded set $\Omega$, we may assume $\Omega \Subset B_R$ for some large $R$, then by considering $B_R \setminus \Omega$, previous results imply the following corollaries:

\begin{corollary}
\label{exterior}
Let $\Omega$ be a bounded set of finite perimeter, then
\begin{align}
    \label{charout}
\mathscr{H}^{n-1}(\partial \Omega \cap \Omega^0)=0  
\end{align}
holds if and only if there exist a sequence of smooth sets $E_k \Supset \Omega$ such that $E_k \rightarrow \Omega$ in $L^1$ and $P(E_k) \rightarrow P(\Omega)$ as $k \rightarrow \infty$.
\end{corollary}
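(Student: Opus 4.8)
\textbf{Proof proposal for Corollary~\ref{exterior}.}

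The plan is to deduce Corollary~\ref{exterior} from Proposition~\ref{sufw}, Theorem~\ref{chengxinxingyi} and Theorem~\ref{nec} by complementation inside a fixed large ball. First fix $R$ with $\overline\Omega\Subset B_R$ and set $\Omega^{*}:=B_R\setminus\overline\Omega$, a bounded set of finite perimeter. Since $\chi_{\Omega^{*}}=\chi_{B_R}-\chi_{\Omega}$ and the supports of $D\chi_{B_R}$ and $D\chi_{\Omega}$ are disjoint (as $\overline\Omega\Subset B_R$), one has $P(\Omega^{*})=P(B_R)+P(\Omega)$. Moreover at every point of the open ball $B_R$ the density of $\Omega^{*}$ equals one minus that of $\Omega$, while every point of $\partial B_R$ has $\Omega^{*}$-density $\tfrac12$; a short density computation (using $\partial\Omega\cap\Omega^{0}\subset B_R$) then gives the identity
\[
\mathscr{H}^{n-1}\!\big(\partial\Omega^{*}\cap(\Omega^{*})^{1}\big)=\mathscr{H}^{n-1}\!\big(\partial\Omega\cap\Omega^{0}\big),
\]
so the hypothesis of the corollary for $\Omega$ is equivalent to the corresponding hypothesis for $\Omega^{*}$.

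The device tying the two one-sided approximation problems together is the following complementation dictionary. On the one hand, suppose $F$ is a smooth set with $B_R\setminus\overline{B_{R-\delta}}\subseteq F\subseteq B_R$ for some $0<\delta<\dist(\overline\Omega,\partial B_R)$ and $\overline F\cap\overline\Omega=\emptyset$. Then $E:=B_R\setminus\overline F$ is a smooth open set with $\overline\Omega\subset E\subset B_{R-\delta}$ (hence $E\Supset\Omega$), its symmetric difference with $\Omega$ agrees up to a null set with that of $F$ with $\Omega^{*}$, and — since $F$ fills $B_R$ near $\partial B_R$, so that $\partial B_R\subset\partial^{*}F$ while $\partial^{*}E\subset\overline E\subset B_R$ — one has
\[
P(E)=\mathscr{H}^{n-1}(\partial^{*}F\cap B_R)=P(F)-\mathscr{H}^{n-1}(\partial^{*}F\cap\partial B_R)=P(F)-P(B_R).
\]
On the other hand, if $E\Supset\Omega$ is smooth with $\overline E\Subset B_R$, then for $r<R$ the set $F:=B_r\setminus\overline E$ is a smooth open set with $\overline F\subset B_r\setminus\overline\Omega\subset\Omega^{*}$ (hence $F\Subset\Omega^{*}$) and $P(F)=P(B_r)+P(E)$, and $F\to\Omega^{*}$ in $L^1$ as $E\to\Omega$ in $L^1$ and $r\to R$. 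In both directions one rounds off the finitely many corners at negligible perimeter cost.

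For the ``only if'' direction one argues as follows: assuming $\mathscr{H}^{n-1}(\partial\Omega\cap\Omega^{0})=0$, the identity above gives $\mathscr{H}^{n-1}(\partial\Omega^{*}\cap(\Omega^{*})^{1})=0$, so Proposition~\ref{sufw} (or Theorem~\ref{chengxinxingyi}) applied to $\Omega^{*}$ produces smooth $D_i\Subset\Omega^{*}$ with $D_i\to\Omega^{*}$ in $L^1$ and $P(D_i)\to P(\Omega^{*})$. Fixing $\delta<\dist(\overline\Omega,\partial B_R)$ and $A:=B_R\setminus\overline{B_{R-\delta}}$, set $D_i':=D_i\cup A$; then $D_i'$ satisfies the hypotheses of the first half of the dictionary ($\overline{D_i'}\cap\overline\Omega=\emptyset$ since $\overline{D_i}\subset\Omega^{*}$ and $\overline\Omega\subset B_{R-\delta}$), and $D_i'\to\Omega^{*}$ in $L^1$. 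Because $A\subset\Omega^{*}$ we have $D_i\cap A\to A$ in $L^1$, so $\liminf_i P(D_i\cap A)\ge P(A)$ by lower semicontinuity; combined with the submodularity bound $P(D_i')\le P(D_i)+P(A)-P(D_i\cap A)$ this yields $\limsup_i P(D_i')\le P(\Omega^{*})$, hence $P(D_i')\to P(\Omega^{*})$. The dictionary then delivers smooth $E_i\Supset\Omega$ with $E_i\to\Omega$ in $L^1$ and $P(E_i)=P(D_i')-P(B_R)\to P(\Omega^{*})-P(B_R)=P(\Omega)$. For the ``if'' direction, given smooth $E_i\Supset\Omega$ with $E_i\to\Omega$ in $L^1$ and $P(E_i)\to P(\Omega)$, one first truncates: a Fubini argument over the spheres $\partial B_s$ gives radii $R_i$ with $\overline\Omega\subset B_{R_i}$, $\overline{B_{R_i}}\subset B_R$ and $\mathscr{H}^{n-1}(\partial B_{R_i}\cap E_i^{(1)})\to0$, and $E_i':=E_i\cap B_{R_i}$ still satisfies $E_i'\Supset\Omega$, $E_i'\to\Omega$ in $L^1$, and (from $P(E_i')\le P(E_i)+\mathscr{H}^{n-1}(\partial B_{R_i}\cap E_i^{(1)})$ together with lower semicontinuity) $P(E_i')\to P(\Omega)$. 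Choosing $r_i\uparrow R$ with $R_i<r_i$, the sets $F_i:=B_{r_i}\setminus\overline{E_i'}$ form a genuine interior approximation of $\Omega^{*}$ with $P(F_i)=P(B_{r_i})+P(E_i')\to P(\Omega^{*})$, so Theorem~\ref{nec} applied to $\Omega^{*}$ gives
\[
P(B_R)+P(\Omega)=\liminf_i P(F_i)\ge P(\Omega^{*})+C(n)\,\mathscr{H}^{n-1}\!\big(\partial\Omega^{*}\cap(\Omega^{*})^{1}\big)=P(B_R)+P(\Omega)+C(n)\,\mathscr{H}^{n-1}\!\big(\partial\Omega\cap\Omega^{0}\big),
\]
which forces $\mathscr{H}^{n-1}(\partial\Omega\cap\Omega^{0})=0$.

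The essentially non-formal point — and the step I expect to be the main obstacle — is the perimeter bookkeeping along $\partial B_R$. Complementing an interior approximation of $\Omega^{*}$ inside $B_R$ in the naive way creates thin shells near $\partial B_R$ whose perimeter does not vanish, which would ruin the convergence $P\to P(\Omega)$; adding (resp.\ removing) the collar $A$ is exactly what makes $\partial B_R$ an interface that is either fully interior to, or fully absent from, the complemented set, so that it contributes only the fixed constant $P(B_R)$, which then cancels in $P(E)=P(F)-P(B_R)$. The accompanying submodularity and lower-semicontinuity argument guaranteeing $P(D_i')\to P(\Omega^{*})$ is where the real work lies. One should also fix the representative $\Omega^{1}$ of $\Omega$ throughout, so that $\Subset$, $\Supset$ and the density/boundary identifications are unambiguous; with this and the routine corner-rounding, the remaining verifications are straightforward.
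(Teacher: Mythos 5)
Your argument is the same complementation strategy the paper invokes (the paper itself just says ``by considering $B_R\setminus\Omega$, previous results imply the following corollaries'' and leaves all details to the reader), and you have filled in the real content --- the collar/submodularity device, the Fubini truncation, and the corner-rounding --- correctly.

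One point deserves to be made explicit rather than swept into ``fix the representative.'' You set $\Omega^{*}:=B_R\setminus\overline\Omega$, whereas the paper takes $B_R\setminus\Omega$. These coincide only up to the set $\overline\Omega\setminus\Omega$, and both your opening identities
\[
\chi_{\Omega^{*}}=\chi_{B_R}-\chi_{\Omega}\ \text{a.e.},\qquad
\partial\Omega^{*}\cap(\Omega^{*})^{1}=\partial\Omega\cap\Omega^{0},
\]
are only valid when $|\overline\Omega\setminus\Omega|=0$; in general one only has $\partial\overline\Omega\cap(\overline\Omega)^{0}\subset\partial\Omega\cap\Omega^{0}$, with $\Omega^{0}\cap\mathrm{int}(\overline\Omega)$ as the possible discrepancy. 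Fortunately this is automatic under either hypothesis of the corollary: in the ``if'' direction $E_k\Supset\Omega$ with $E_k\to\Omega$ in $L^1$ forces $|\overline\Omega\setminus\Omega|\le|E_k\setminus\Omega|\to0$; in the ``only if'' direction, $\overline\Omega\setminus\Omega\subset\partial\Omega$ decomposes (up to $\mathcal L^n$-null sets) into $(\overline\Omega\setminus\Omega)\cap\Omega^{0}$, which is Lebesgue-null because $\mathscr H^{n-1}(\partial\Omega\cap\Omega^{0})=0$, and $(\overline\Omega\setminus\Omega)\cap\Omega^{1}$, which is always Lebesgue-null (a set disjoint from $\Omega$ on which $\Omega$ has density one has density zero at each of its own points). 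You should record this verification; alternatively, taking $\Omega^{*}=B_R\setminus\Omega$ as the paper does makes $\chi_{\Omega^{*}}=\chi_{B_R}-\chi_{\Omega}$ and $\partial\Omega^{*}=\partial B_R\cup\partial\Omega$ exact without any extra hypothesis, at the price of $\Omega^{*}$ not being open (but $F\Subset\Omega^{*}$ still reads $\overline F\subset\mathrm{int}\,\Omega^{*}=B_R\setminus\overline\Omega$, so nothing in your dictionary actually changes). With that remark in place, your collar argument --- replacing $D_i$ by $D_i\cup A$ and using $P(D_i\cup A)\le P(D_i)+P(A)-P(D_i\cap A)$ together with $\liminf P(D_i\cap A)\ge P(A)$ to keep $P\to P(\Omega^{*})$ --- and your Fubini truncation in the converse direction are exactly the right bookkeeping for the fixed $P(B_R)$ contribution, and the proof is complete.
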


\begin{corollary}
For any bounded set of finite perimeter $\Omega \subset \mathbb{R}^n$, we can choose smooth sequence $E_k \Supset \Omega$ such that $E_k \rightarrow \Omega$ in $L^1$ and \begin{align*}
\limsup_{i \rightarrow \infty} P(E_i) \le P(\Omega)+C_1(n) \mathscr{H}^{n-1}(\partial \Omega \cap \Omega^0)
\end{align*}for some dimensional constant $C_1(n)$.
Conversely, for any sets $E_k \Supset \Omega$ satisfying $E_k \rightarrow \Omega$ in $L^1$, there exists a dimensional constant $C_2(n)$ such that the following inequality holds:
\begin{align}
\label{gap}
   \liminf_{k \rightarrow \infty} P(E_k) \ge P(\Omega)+ C_2(n) \mathscr{H}^{n-1}(\partial \Omega \cap \Omega^0).
\end{align}
\end{corollary}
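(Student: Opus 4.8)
The plan is to deduce both halves of the corollary from the interior‑approximation theorems already proved, Theorem~\ref{chengxinxingyi} and Theorem~\ref{nec}, by complementing inside a large ball. Fix $R_0<R$ with $\Omega\Subset B_{R_0}\Subset B_R$ and set $A:=B_R\setminus\overline\Omega$, a bounded open set of finite perimeter. First I would record a dictionary between $A$ and $\Omega$. Since $\Omega\Subset B_R$, the gradients $D\chi_{B_R}$ and $D\chi_\Omega$ have disjoint supports, so $\chi_A=\chi_{B_R}-\chi_\Omega$ gives $P(A)=P(\Omega)+\mathscr{H}^{n-1}(\partial B_R)$. Computing densities, $A^1=\Omega^0\cap B_R$, and $\partial A=\partial\overline\Omega\cup\partial B_R$ with $\partial B_R\cap A^1=\emptyset$, so $\partial A\cap A^1=\partial\overline\Omega\cap\Omega^0\subseteq\partial\Omega\cap\Omega^0$; hence $\mathscr{H}^{n-1}(\partial A\cap A^1)\le\mathscr{H}^{n-1}(\partial\Omega\cap\Omega^0)$ always, with equality when $|\partial\Omega|=0$ (the only nontrivial case: if $|\partial\Omega|>0$ then $\mathscr{H}^{n-1}(\partial\Omega\cap\Omega^0)=+\infty$ and both assertions are degenerate).

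For the lower bound, let $E_k$ be any (open) sets with $\Omega\Subset E_k$ and $E_k\to\Omega$ in $L^1$; reducing in the routine way to $E_k\subseteq B_{R-1}$, put $F_k:=B_{R-1/k}\setminus\overline{E_k}$. For $k$ large $\overline{E_k}\Subset B_{R-1/k}$, so $F_k\Subset A$, $F_k\to A$ in $L^1$, and (disjoint gradient supports again) $P(F_k)=\mathscr{H}^{n-1}(\partial B_{R-1/k})+P(E_k)$. Theorem~\ref{nec} applied to $A$ with this sequence gives $\liminf_k P(F_k)\ge P(A)+C(n)\mathscr{H}^{n-1}(\partial A\cap A^1)$; substituting the dictionary and the identity for $P(F_k)$, the two copies of $\mathscr{H}^{n-1}(\partial B_R)$ cancel and we get $\liminf_k P(E_k)\ge P(\Omega)+C(n)\mathscr{H}^{n-1}(\partial\Omega\cap\Omega^0)$.

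For the construction I would start from Theorem~\ref{chengxinxingyi} applied to $A$: smooth $F_k\Subset A$, $F_k\to A$ in $L^1$, $\limsup_k P(F_k)\le P(A)+C(n)\mathscr{H}^{n-1}(\partial A\cap A^1)\le P(\Omega)+\mathscr{H}^{n-1}(\partial B_R)+C(n)\mathscr{H}^{n-1}(\partial\Omega\cap\Omega^0)$. One cannot simply take $E_k:=B_R\setminus\overline{F_k}$: that set carries all of $\partial B_R$ in its boundary, which, together with the copy of $\mathscr{H}^{n-1}(\partial B_R)$ already hidden inside $P(F_k)\approx P(A)$, leaves a spurious additive $2\mathscr{H}^{n-1}(\partial B_R)$. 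Instead, after a preliminary surgery (next paragraph) ensuring that $F_k$ contains a full spherical shell $\{s_k<|x|<s_k'\}$ with $R_0<s_k<s_k'<R$, I would let $E_k$ be the union of the \emph{bounded} connected components of $\mathbb{R}^n\setminus\overline{F_k}$. Then $\overline\Omega\subseteq E_k$ (the shell traps every component meeting $\overline\Omega\subseteq B_{s_k}$), $E_k$ is a bounded smooth open set since $\partial E_k\subseteq\partial F_k$, and $E_k\to\Omega$ in $L^1$ because $\mathbb{R}^n\setminus\overline{F_k}\to\overline\Omega\cup(\mathbb{R}^n\setminus B_R)$ and the unbounded component is removed. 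For the perimeter: distinct components of $\mathbb{R}^n\setminus\overline{F_k}$ have disjoint closures, so $P(F_k)=P(\mathbb{R}^n\setminus\overline{F_k})=P(E_k)+P(E_k^{\mathrm{ext}})$ with $E_k^{\mathrm{ext}}$ the unbounded component; and $E_k^{\mathrm{ext}}\supseteq\mathbb{R}^n\setminus\overline{B_R}$ with $|E_k^{\mathrm{ext}}\cap B_R|\to0$, so $\chi_{E_k^{\mathrm{ext}}}\to\chi_{\mathbb{R}^n\setminus B_R}$ in $L^1_{\mathrm{loc}}$ and lower semicontinuity of perimeter yields $\liminf_k P(E_k^{\mathrm{ext}})\ge\mathscr{H}^{n-1}(\partial B_R)$. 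Therefore $\limsup_k P(E_k)\le\limsup_k P(F_k)-\mathscr{H}^{n-1}(\partial B_R)\le P(\Omega)+C(n)\mathscr{H}^{n-1}(\partial\Omega\cap\Omega^0)$.

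The delicate point---the main obstacle---is exactly that $E_k\Supset\Omega$ forces $\overline{F_k}$ to separate $\overline\Omega$ from infinity (a priori a thin channel of $\mathbb{R}^n\setminus\overline{F_k}$ could escape past $\partial B_R$), and this must be arranged \emph{without} inflating the perimeter by a fixed amount. I would do this by a hole‑filling surgery: since $\int_{R_0}^{R}\mathscr{H}^{n-1}(\partial B_s\setminus F_k)\,ds\le|A\setminus F_k|\to0$, by Fubini's theorem one may choose radii $R_0<s_k<s_k'<R$ with $\mathscr{H}^{n-1}(\partial B_{s_k}\setminus F_k)$ and $\mathscr{H}^{n-1}(\partial B_{s_k'}\setminus F_k)\to0$, and replace $F_k$ by the smooth set obtained from $F_k\cup\{s_k\le|x|\le s_k'\}$ by rounding its $(n-2)$‑dimensional edges. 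The new $F_k$ still satisfies $F_k\Subset A$ and $F_k\to A$, now contains the separating shell, and costs only $P(F_k^{\mathrm{new}})\le P(F_k)+\mathscr{H}^{n-1}(\partial B_{s_k}\setminus F_k)+\mathscr{H}^{n-1}(\partial B_{s_k'}\setminus F_k)+o(1)=P(F_k)+o(1)$, so the estimate above is unaffected. Checking that this surgery is $o(1)$‑cheap (rather than $O(\mathscr{H}^{n-1}(\partial B_R))$) is what makes the cancellation in the last step legitimate; everything else is bookkeeping with the dictionary and with lower semicontinuity.
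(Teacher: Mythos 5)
Your overall strategy is the one the paper implicitly has in mind (complementing inside a large ball $B_R$ and applying Theorems~\ref{chengxinxingyi} and~\ref{nec} to the resulting set), and you correctly identify a genuine obstacle the paper's one-line remark glosses over: the naive choice $E_k:=B_R\setminus\overline{F_k}$ produces a spurious $2\mathscr{H}^{n-1}(\partial B_R)$, since $P(F_k)$ already carries one copy from $P(A)$ and the complement acquires another. Your repair---grafting in a full spherical shell whose cost is controlled by Fubini applied to $|A\setminus F_k|\to 0$, then keeping only the bounded components of $\mathbb{R}^n\setminus\overline{F_k}$---is a correct and necessary ingredient, and the bookkeeping $P(F_k)=P(E_k)+P(E_k^{\mathrm{ext}})$ together with $\liminf P(E_k^{\mathrm{ext}})\ge\mathscr{H}^{n-1}(\partial B_R)$ gives the advertised cancellation. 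So the construction half is sound.

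There is, however, a flaw in how you set up the dictionary. You work with $A:=B_R\setminus\overline{\Omega}$, for which one only gets $\partial A\cap A^1=\partial\overline\Omega\cap(\overline\Omega)^0\subseteq\partial\Omega\cap\Omega^0$, and you dispose of the possible strict inequality by asserting that $|\partial\Omega|>0$ forces $\mathscr{H}^{n-1}(\partial\Omega\cap\Omega^0)=+\infty$. That implication is not justified: from $|\partial\Omega|>0$ one only gets $|\partial\Omega\cap\Omega^0|+|\partial\Omega\cap\Omega^1|>0$, and the positive mass could in principle sit entirely in $\Omega^1$, in which case $\mathscr{H}^{n-1}(\partial\Omega\cap\Omega^0)$ need not be infinite. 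Moreover, even when $\mathscr{H}^{n-1}(\partial\Omega\cap\Omega^0)=\infty$, the lower bound in the corollary is not vacuous---it asserts $P(E_k)\to\infty$ for every admissible sequence, which still requires proof. The correct and simpler move is to take $A:=B_R\setminus\Omega$ (which is a bounded set of finite perimeter since $\chi_A=\chi_{B_R}-\chi_\Omega$; the theorems impose no openness on the ambient set). With this $A$ one computes $\partial A=\partial B_R\cup\partial\Omega$ and $A^1=\Omega^0\cap B_R$, hence $\partial A\cap A^1=\partial\Omega\cap\Omega^0$ exactly, in all cases. This removes the need for any degeneracy argument and makes both directions go through cleanly. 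One should also state explicitly that the $F_k:=B_{R-1/k}\setminus\overline{E_k}$ used in the lower bound satisfy $\overline{F_k}\cap\Omega=\emptyset$ (which does hold once $\Omega\Subset E_k$ is read as $\overline\Omega$ lying in the interior of $E_k$), and that $|A\setminus F_k|\to 0$ uses only $|E_k\setminus\Omega|\to 0$ plus $|B_R\setminus B_{R-1/k}|\to 0$; these are routine but worth a sentence.
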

 
Next, we remark that Proposition \ref{sufw} fully generates \cite[Theorem 1.1]{Schmidt}. Indeed, \cite[Theorem 1.1]{Schmidt} states that under the hypothesis $\mathscr{H}^{n-1}(\partial \Omega)=P(\Omega)$, there exists sets $E_{\epsilon}$ such that $E_{\epsilon} \Subset \Omega$, $P(E_{\epsilon})<P(\Omega)+\epsilon$ and moreover 
\begin{align}
\label{hausforffdistance}
    \Omega \setminus E_{\epsilon} \subset \mathcal{N}_{\epsilon}(\partial \Omega) \cap \mathcal{N}_{\epsilon}(\partial E_{\epsilon}),
\end{align}where $\mathcal{N}_{\epsilon}(\cdot)$ denotes the $\epsilon$-neighborhood of a set. We claim that under the weaker assumption \eqref{char}, the above conclusions also hold. Indeed, it suffices to prove \eqref{hausforffdistance} for the sets constructed in our proof, which is clearly true since the diameters of the covering sets we chose can be arbitrarily smaller.

Moreover, we note that Theorem \ref{chengxinxingyi} and Theorem \ref{nec} also improve Theorem \ref{linshi}, which gives the characterization of sets having strict interior approximation sequence of uniformly bounded perimeters.  Indeed, if $\mathscr{H}^{n-1}(\partial \Omega \setminus \Omega^0)<\infty$, then by Theorem \ref{chengxinxingyi} we can construct smooth sets $E_k$ satisfying \eqref{moregeneralapproximation}, and hence from \eqref{moregeneralapproximation} these sets  must have uniformly bounded perimeters. Conversely, if such approximating sets exist, then immediately from \eqref{wa} we have $\mathscr{H}^{n-1}(\partial \Omega \setminus \Omega^0)=P(\Omega)+\mathscr{H}^{n-1}(\partial \Omega \cap \Omega^1)<\infty.$ 

In addition, we remark that these theoretical results are potentially applicable to some other problems related to rough domains. For example, it can be shown that strict interior approximation of BV functions studied in \cite{Schmidt} also holds for domains satisfying \eqref{char} by the construction in the proof of Theorem \ref{chengxinxingyi}. Another example is that under the condition \eqref{char}, we can replace the approximation sequence of sets $E_k$ of uniformly bounded perimeters in the proof of \cite[Theorem 5.2]{CLT} by sets described in Question \ref{fund2} , thus the same argument in \cite{CLT} can prove that the trace of any bounded divergence measure field $F$ is not only $L^{\infty}$, but also its $L^{\infty}$ norm is bounded by $\Vert F\Vert _{\infty}$. This improves the result in \cite[Theorem 5.2]{CLT} under the stronger assumption \eqref{char} from the interior approximation perspective. (A different proof can be found in \cite{CT05} or \cite{CTZ} using measure theory.) We also remark that the advantage of the existence of smooth interior approximation sequence $E_i$ also lies in the fact that for any bounded continuous function $g$, $\int_{\partial E_i} g d\mathscr{H}^{n-1} \rightarrow \int_{\partial^* \Omega} g d\mathscr{H}^{n-1}$. This property could potentially to be used in finding weak solutions to some equations on rough domains satisfying \eqref{char} through interior approximation argument. 

Finally, our upcoming paper \cite{GHL2} will apply Theorem \ref{chengxinxingyi} to prove sharp estimates for the total variation of distributional gradient of extended BV functions and divergence measure fields, and we will also apply this theorem to prove approximation results for trace of bounded BV functions.


\hfill\\

\subsection{Sketch of proofs}
Since the proofs of our main results Theorem \ref{chengxinxingyi} and Theorem \ref{nec} are somewhat technical, we would like to outline the ideas behind those proofs to convince the reader.

The proof of Theorem \ref{chengxinxingyi} partially uses the argument in \cite{Schmidt} and \cite{CLT}. In \cite{Schmidt}, under the assumption \eqref{xiang1}, in order to show the existence of strict interior approximation sets, it suffices to find a nice covering of $\partial^* \Omega$. The idea in \cite{Schmidt} is that one can cover $\partial^* \Omega$ by flat cylinders, due to De Giorgi's structure Theorem (see for example \cite{Maggi}) for the reduced boundary of set of finite perimeter. Because the cylinders are quite flat, the total perimeter inside $\Omega$ of the cylinders is approximately the $\mathscr{H}^{n-1}$ measure of the reduced boundary of $\Omega$, which is the perimeter of $\Omega$. In our Theorem \ref{chengxinxingyi}, we need to additionally deal with $\partial \Omega\cap \Omega^0$, $\partial \Omega\cap \Omega^1$, and the rest of $\partial \Omega$ which is $\mathscr{H}^{n-1}$ negligible set. The essence of the proof is to delicately choose a desirable covering of $\partial \Omega \cap \Omega^0$. The idea is that although the $\mathscr{H}^{n-1}$ measure, or even the Lebesgue measure of $\partial \Omega \cap \Omega^0$ could be positive, see for example \cite[Example 8.4]{LT} or \cite{BGM09}, we can still cover $\partial \Omega \cap \Omega^0$ by sufficiently small balls such that the total perimeter of the balls contributed inside $\Omega$ is bounded by (up to a universal constant factor) the total perimeter of $\Omega$ inside these small balls, which is very small because the perimeter of $\Omega$ is concentrated on $\partial^* \Omega$. The details of how to choose such covering balls will be carefully explained in the proof. Hence so far we have chosen  flat cylinders covering $\partial^* \Omega$ and balls covering $\partial \Omega \cap \Omega^0$ and such that the total perimeter of the covering sets contributed inside $\Omega$ is approximately the $\mathscr{H}^{n-1}$ measure of $\partial^* \Omega$, then we can cover the rest of the boundary, which is $\mathscr{H}^{n-1}$-equivalent to $\partial \Omega \cap \Omega^1$, by small balls whose total perimeters are bounded above by $C(n) \mathscr{H}^{n-1}(\partial \Omega \cap \Omega^1)$ and total volumes are very small. Let $Q$ be the union of these covering sets. Hence the perimeter of $\Omega \setminus Q$, which is roughly the perimeter of $Q$ inside $\Omega$, is approximately bounded by the perimeter of $\Omega$ plus $C(n)\mathscr{H}^{n-1}(\partial \Omega \cap \Omega^1)$ with small error depending on the size of the covering sets. As the covering balls and flat cylinders are chosen to be smaller and smaller, we thus obtain a desired sequence $E_i$ as in Theorem \ref{chengxinxingyi}.

The idea of the proof of Theorem \ref{nec} is roughly as follows: Given such an approximation sequence $E_i$, one can observe that inside a suitably chosen small neighborhood of $\partial  \Omega \cap \Omega^1$, the perimeter of $E_i$ is roughly bounded below by $\mathscr{H}^{n-1}(\partial \Omega \cap \Omega^1)$ up to a dimensional constant. This can be proved by relative isoperimetric inequality and a covering argument. Such a neighborhood of $\partial  \Omega \cap \Omega^1$ can be so small that the perimeter of $\Omega$ inside the closure of the neighborhood is also very small, since perimeter measure is only concentrated on the reduced boundary of $\Omega$, not on $\partial \Omega \cap \Omega^1$. Therefore the perimeter of $\Omega$ mainly concentrated outside the closure of this neighborhood, and thus by lower semicontinuity of sets of finite perimeter measure, the perimeter of $E_i$ outside the neighborhood is larger than the perimeter of $\Omega$ as $i$ becomes large. Hence adding up the perimeter of $E_i$ inside and outside the above chosen neighborhood, we can see that the total perimeter of $E_i$, as $i$ becomes large, is approximately bounded below by the total perimeter of $\Omega$ plus $C(n)\mathscr{H}^{n-1}(\partial \Omega \cap \Omega^1)$ for some constant $C(n)>0$. This concludes the result in Theorem \ref{nec}. The rigorous argument can be found in the proof.
\hfill\\
\subsection*{Outline of the paper} In section 2, we recall some basic facts of the theory of sets of finite perimeter. In section 3, we give the full detailed proof of our main results Theorem \ref{chengxinxingyi} and Theorem \ref{nec}.

\section{PRELIMINARIES}
In this section we first recall some properties of sets of finite perimeter. More results can be seen from standard references by Ambrosio-Fusco-Pallara \cite{afp},  Evans \cite{Evans}, Maggi \cite{Maggi}, etc.

We start with some basic notions and definitions. First, we denote by $\mathscr{H}^{n-1}$ the $\left(n-1\right)$-dimensional
Hausdorff measure in $\mathbb{R}^{n}$, and by $\mathcal{L}^{n}$ the Lebesgue
measure in $\mathbb{R}^{n}$. We will use the notation $\mathcal{L}^{n}(E)=|E|$. The topological boundary of set $E$ is denoted as $\partial E$.  Also, we denote $B_r(x)$ as the open ball of radius $r$ and center at $x$. We let $\omega_{n}$ be the volume of the $n$-dimensional unit ball.

\begin{definition} \label{def1} Given every $\alpha\in[0,1]$ and every
$\mathcal{L}^{n}$-measurable set $E\subset\mathbb{R}^{n}$, define 
\begin{equation}
E^{\alpha}:=\{x\in\mathbb{R}^{n}\,:\, \lim_{r\rightarrow0} \frac{ |E\cap B(x,r)|}{|B(x,r)|}=\alpha\},\label{densityone}
\end{equation}
which is the set of points where $E$ has density $\alpha$ with respect to Lebesgue measure. We define the \textit{{measure-theoretic boundary}} of $E$, $\partial^{m}E$,
as 
\begin{equation}
\partial^{m}E:=\mathbb{R}^{n}\setminus(E^{0}\cup E^{1}).
\end{equation} 
\end{definition}

\begin{definition} Let $E\subset\mathbb{R}^{n}$. We say that $E$ is a \textit{set
of finite perimeter} in $\mathbb{R}^n$ if 
\begin{equation}
\label{defoffinper}
P(E):=\sup\left\{\int_{E}\div\varphi dx:\varphi\in C_{c}^{1}(\mathbb{R}^n),\Vert{\varphi}\Vert_{\infty}\leq1 \right \}<\infty.
\end{equation}
Note that \eqref{defoffinper} implies that the distributional gradient $D\chi_E$ is a finite vector valued measure in $\mathbb{R}^n$, which is also called the Gauss-Green measure. We denote its total variation as $\Vert{D\chi_E}\Vert$. 
\end{definition}
\begin{definition} \label{reducedboundary}
Let $E$ be a set of finite perimeter in $\mathbb{R}^{n}$. The \textit{{reduced
boundary}} of $E$, denoted as $\partial^{*}E$, is the set of all
points $x\in\mathbb{R}^{n}$ such that 
\begin{enumerate}
\item \ensuremath{\Vert D\chi_E \Vert(B(x,r))>0} for all \ensuremath{r>0}
;
\item The limit \ensuremath{{\bf \nu}_{E}(x):=\lim_{r\to0}\frac{D\chi_E(B(x,r))}{\Vert D\chi_E \Vert(B(x,r))}}
exists and $|{\bf \nu}_{E}(x)|=1$. 
\end{enumerate}
\end{definition}

For any Borel set $W$, we always use $P(E,W):=\Vert{D\chi_E}\Vert(W)$ to denote the perimeter of $E$ inside $W$, or the relative perimeter of $E$ in $W$.

\begin{remark} If $E$ is a set of finite perimeter in $\mathbb{R}^{n}$, then
\begin{equation}
\Vert D\chi_E\Vert=\mathcal{H}^{n-1}\lfloor_{\partial^ * E},\label{normHaus}
\end{equation}
Hence for any set $W \subset \mathbb{R}^n$,
\begin{align*}
    P(E,W)=\mathscr{H}^{n-1}(\partial^*E \cap W)
\end{align*}
\end{remark}

The following result is due to Federer, which can be found in standard textbooks such as \cite[Theorem 3.61]{afp}:

\begin{theorem} \label{Fed}
If $E$ is a set of finite perimeter in $\mathbb{R}^{n}$, then 
\begin{equation}
\partial^{*}E\subset E^{\frac{1}{2}}\subset\partial^{m}E,\quad\mathcal{H}^{n-1}(\mathbb{R}^{n}\setminus(E^{0}\cup\partial^{*}E\cup E^{1}))=0.\label{2.2a1}
\end{equation}
In particular, $E$ has density either 0 or 1/2 or 1 at $\mathcal{H}^{n-1}$-a.e.
$x\in\mathbb{R}^{n}$ and $\mathcal{H}^{n-1}$-a.e. $x\in\partial^{m}E$ belongs to $\partial^{*}E$.
\end{theorem}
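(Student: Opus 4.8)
The plan is to establish the three assertions in order, using De Giorgi's structure theorem for the reduced boundary, the relative isoperimetric inequality in a ball, and the classical comparison between a Radon measure and $\mathscr{H}^{n-1}$ via upper densities.

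First I would prove $\partial^{*}E\subset E^{1/2}$. If $x\in\partial^{*}E$, De Giorgi's structure theorem says the blow-ups $(E-x)/r$ converge in $L^{1}_{\loc}(\mathbb{R}^{n})$ as $r\to0^{+}$ to the half-space $H:=\{y\in\mathbb{R}^{n}:y\cdot\nu_{E}(x)\le0\}$; pairing this with $\chi_{B(0,1)}$ gives $|E\cap B(x,r)|/|B(x,r)|\to|H\cap B(0,1)|/|B(0,1)|=\tfrac12$, so $x\in E^{1/2}$. The inclusion $E^{1/2}\subset\partial^{m}E$ is then immediate, since a point has at most one density and hence $E^{1/2}$ is disjoint from $E^{0}\cup E^{1}=\mathbb{R}^{n}\setminus\partial^{m}E$.

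The substance is the final claim $\mathscr{H}^{n-1}(\Sigma)=0$, where $\Sigma:=\mathbb{R}^{n}\setminus(E^{0}\cup\partial^{*}E\cup E^{1})$; note $\Sigma=\partial^{m}E\setminus\partial^{*}E$. Write $\mu:=\|D\chi_{E}\|$, a finite Radon measure equal, by \eqref{normHaus}, to the restriction of $\mathscr{H}^{n-1}$ to $\partial^{*}E$, so in particular $\mu(\Sigma)=0$. I would first establish the density lower bound
\[
\Theta(x):=\limsup_{r\to0^{+}}\frac{\mu(B(x,r))}{r^{n-1}}>0\qquad\text{for every }x\in\partial^{m}E.
\]
To prove this, fix $x\in\partial^{m}E$ and set $f(r):=|E\cap B(x,r)|/|B(x,r)|$, continuous in $r>0$. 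Since $x\notin E^{0}$ we have $\limsup_{r\to0}f(r)>0$, and since $x\notin E^{1}$ we have $\liminf_{r\to0}f(r)<1$. If these coincide, then $f$ stays in a compact subinterval of $(0,1)$ for all small $r$; otherwise I pick $\theta$ with $\liminf_{r\to0}f(r)<\theta<\limsup_{r\to0}f(r)$ (which forces $\theta\in(0,1)$) and use the intermediate value theorem to find arbitrarily small $r$ with $f(r)=\theta$. Either way there are $r_{k}\downarrow0$ and $c_{x}>0$ with $\min\{|E\cap B(x,r_{k})|,|B(x,r_{k})\setminus E|\}\ge c_{x}r_{k}^{n}$, and the relative isoperimetric inequality in $B(x,r_{k})$ upgrades this to $\mu(B(x,r_{k}))\ge c(n)c_{x}^{(n-1)/n}r_{k}^{n-1}$, giving $\Theta(x)>0$. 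Then I would write $\Sigma=\bigcup_{m\ge1}\Sigma_{m}$ with $\Sigma_{m}:=\{x\in\Sigma:\Theta(x)>1/m\}$ (Borel, increasing, exhausting $\Sigma$ by the bound just shown) and invoke the standard density comparison lemma, namely that $\Theta>t$ on a Borel set $A$ forces $\mathscr{H}^{n-1}(A)\le c(n)t^{-1}\mu(A)$. Hence $\mathscr{H}^{n-1}(\Sigma_{m})\le c(n)m\,\mu(\Sigma_{m})\le c(n)m\,\mu(\Sigma)=0$ for each $m$, so $\mathscr{H}^{n-1}(\Sigma)=0$. The ``in particular'' statements follow: off the $\mathscr{H}^{n-1}$-null set $\Sigma$ every point lies in $E^{0}\cup\partial^{*}E\cup E^{1}\subseteq E^{0}\cup E^{1/2}\cup E^{1}$, and $\partial^{m}E\setminus\partial^{*}E=\Sigma$ is null.

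The hard part is the density lower bound $\Theta(x)>0$ on $\partial^{m}E$: one must extract a \emph{single} sequence of radii along which both $E$ and its complement fill a fixed positive fraction of the ball, which is exactly where the case split on $\liminf_{r\to0}f(r)$ versus $\limsup_{r\to0}f(r)$ and the intermediate value theorem enter. Everything else --- De Giorgi's theorem, the relative isoperimetric inequality, and the Hausdorff-density comparison --- is standard.
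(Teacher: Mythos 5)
The paper does not prove this theorem: it states it as classical background and cites \cite[Theorem 3.61]{afp} (Federer's theorem). So there is no ``paper's proof'' to compare against; what you have written is a proof of a black-boxed reference. That said, your proof is correct and is essentially the standard argument found in Ambrosio--Fusco--Pallara and Maggi. The chain is the right one: blow-up via De Giorgi's structure theorem gives $\partial^*E\subset E^{1/2}$; the pointwise lower bound $\limsup_{r\to0}\|D\chi_E\|(B(x,r))/r^{n-1}>0$ on all of $\partial^mE$ follows from continuity of $r\mapsto|E\cap B(x,r)|/|B(x,r)|$ plus the relative isoperimetric inequality (your case split on $\liminf$ versus $\limsup$ to extract radii with a fixed volume split is exactly the standard device); and the upper-density comparison lemma for Radon measures (\cite[\S2.4.3]{Evans}) applied to $\mu=\|D\chi_E\|=\mathscr{H}^{n-1}\res\partial^*E$, which is null on $\Sigma=\partial^mE\setminus\partial^*E$, forces $\mathscr{H}^{n-1}(\Sigma)=0$. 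No gaps; this is a complete and faithful reproduction of the textbook proof the paper implicitly relies on.
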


We also refer to the sets $E^{0}$ and $E^{1}$ as the \textit{measure-theoretic
exterior and interior} of $E$. 


\section{Proof of Main Results}
In this section, most of time we use $C(n)$ to denote various constants depending on dimension $n$. We use $A \lesssim_{n} B$ to denote $A \le C(n) B$ for some constant $C(n)>0$.

\begin{proof}[Proof of Theorem \ref{chengxinxingyi}]
We first note that by standard smooth approximation of sets of finite perimeter, see for example \cite[Theorem 13.8]{Maggi}, it suffices for us to construct a sequence of nonsmooth sets $E_k \Subset \Omega$ such that $E_k \rightarrow \Omega$ in $L^1$ and $\limsup_kP(E_k) \le  P(\Omega)+C(n)\mathscr{H}^{n-1}(\partial \Omega \cap \Omega^1)$.

We also note that in order to prove \eqref{moregeneralapproximation}, we may WLOG assume that $\mathscr{H}^{n-1}(\partial \Omega \cap \Omega^1) <\infty$, otherwise \eqref{moregeneralapproximation} automatically holds for any sequence of sets $E_k \Subset \Omega$ with $E_k \rightarrow \Omega$ in $L^1$. The proof goes as follows.

First, for any fixed $x \in \partial \Omega \cap \Omega^0$, by definition of $\Omega^0$ there exists $\gamma_x>0$ such that when $0<r<\gamma_x$, $|\Omega \cap B_r(x)|\le \frac{1}{2}|B_r(x)|$. Since for almost every $r>0$,
\begin{align}
\label{foliation'}
    \mathscr{H}^{n-1}(\partial B_r(x) \cap \partial^* \Omega)=0,
\end{align}
applying divergence theorem for vector fields $\bm{F}(y)=\frac{y-x}{r}$ on $\Omega \cap B_r(x)$, which is a set of finite perimeter, we have the estimate
\begin{align}
    &\mathscr{H}^{n-1}(\partial B_r(x) \cap \Omega^1)\\
    \le & \frac{n|\Omega \cap B_r(x)|}{r} + P(\Omega; B_r(x)), \quad \mbox{by divergence theorem and \cite[Theorem 16.3]{Maggi}} \\
    = & n\left(\frac{|\Omega \cap B_r(x)|^{1/n}}{r}\right)|\Omega \cap B_r(x)|^{\frac{n-1}{n}}+P(\Omega; B_r(x))\\
\label{lichi}    \le & C(n)P(\Omega; B_r(x)), \quad \mbox{by \cite[Proposition 12.37]{Maggi}.}
\end{align}
Since $P(\Omega, \partial \Omega \cap \Omega^0)=\mathscr{H}^{n-1}\left(\partial ^* \Omega\cap (\partial \Omega \cap \Omega^0)\right)=0$, for any $0<\epsilon<1$ we can choose an open set $U \supset \partial \Omega \cap \Omega^0$ such that 
\begin{align}
\label{short}
    P(\Omega, U)<\epsilon.
\end{align}
Note that for any $x \in \partial \Omega \cap \Omega^0$, we can choose $0<r_x<\gamma_x$ so small such that $B_{r_x} \subset U$ and \eqref{foliation'} holds for $r=r_x$, hence by \eqref{lichi} we have  
\begin{align}
\label{lian}
     \mathscr{H}^{n-1}(\partial B_{r_x}(x) \cap \Omega^1) \le C(n)P(\Omega; B_{r_x}(x)).
\end{align}
Since $\partial \Omega \cap \Omega^0 \subset \cup_{x \in \partial \Omega \cap \Omega^0} B_{r_x}(x)$, by Besicovitch covering theorem, we can choose $\xi(n)$ collection of balls $\cup_{i=1}^{\xi(n)} \cup_{B_{ij} \in \Lambda _i} B_{ij}$ from $\{B_{r_x}(x): x\in \partial \Omega \cap \Omega^0\}$ such that the union of these balls cover $\partial \Omega \cap \Omega^0$, and in each collection $\Lambda_i$ the balls $B_{ij}$ are disjoint. Let us relabel these balls as $B_j$. Therefore by \eqref{short}, \eqref{lian} and the choice of $B_j$ we have
\begin{align}
\label{smallnew}
    P(\cup_j B_j, \Omega^1) \lesssim_n \sum_{j} \mathscr{H}^{n-1}(\partial B_j \cap \Omega^1) \le C(n)\xi(n)P(\Omega, U)<\epsilon C(n)\xi(n).
\end{align}
From the above we can certainly choose $r_{B_j}<\epsilon$, where $r_{B_j}$ is the radius of the ball $B_j$, and hence again by \cite[Proposition 12.37]{Maggi} and the choice of $B_j$, we have 
\begin{align}
\label{smallvolume1}
    \sum_j |B_j \cap \Omega| \lesssim_n \epsilon \sum_j |\Omega \cap B_j|^{\frac{n-1}{n}} \lesssim_n \epsilon \sum_j P(\Omega, B_j) \lesssim_n  \epsilon\xi(n) P(\Omega, U)\lesssim_n \epsilon^2 \xi(n).
\end{align}
In view of \eqref{smallnew} and \eqref{smallvolume1}, so far we have chosen balls covering $\partial \Omega \cap \Omega^0$ such that the sum of perimeters of the balls inside $\Omega$ has order $\epsilon$, and the sum of the their volumes has order $\epsilon^2$. 

Next, by repeating the argument of the proof of \cite[Theorem 1.1]{Schmidt}, we can decompose $\partial^* \Omega$ into countably many disjoint flat pieces $R_1, R_2, \cdots$, and then each flat piece $R_i$ is contained in a countable union of flat cylinders $C_{i,j},\, j=1, 2, \cdots$ Moreover, for each $i$, the sum of the perimeters of $C_{i,j}, \, j=1,2,\cdots$ inside $\Omega$ is less than $(1+C(n)\epsilon)\mathscr{H}^{n-1}(R_i)$, which can be derived from the exact same argument as shown in \cite{Schmidt}. Hence we have
\begin{align}
\label{aaa0}
    P(\cup_{i,j}C_{i,j},\Omega) \le(1+C(n)\epsilon)P(\Omega).
\end{align} Using $r_{i,j},h_{i,j}$ as the same notations in the proof of \cite[Theorem 1.1]{Schmidt}, one can also see that
\begin{align}
\label{smallvolume2}
    \sum_j |C_{i,j}| \lesssim_n \sum_j h_{i,j}r_{i,j}^{n-1}\lesssim_n \epsilon^2 \left(\mathscr{H}^{n-1}(R_i)+2^{-i}\epsilon\right),
\end{align}hence \begin{align}
\label{aaa1}
    \sum_{i,j} |C_{i,j}| \lesssim_n \epsilon^2( P(\Omega)+\epsilon).
\end{align}
Slightly modifying the size of each flat cylinder $C_{i,j}$ does not affect the previous assertions, and since \begin{align}
\label{equivalence}
    \mathscr{H}^{n-1}(\partial \Omega \cap \Omega^1)<\infty,
\end{align}
by \cite[Proposition 2.16]{Maggi} we may assume that for each $C_{i,j}$, 
\begin{align}
\label{foliation2}
    \mathscr{H}^{n-1}\left(\partial C_{i,j} \cap (\partial \Omega \cap \Omega^1)\right)=0.
\end{align}
hence from \eqref{aaa0} and \eqref{foliation2} we have
\begin{align}
\label{aaa2}
    P(\cup_{i,j}C_{i,j},\Omega^1) \le(1+C(n)\epsilon)P(\Omega).
\end{align}
Next, since $$\partial \Omega \setminus \left((\partial \Omega \cap \Omega^0) \cup \partial^* \Omega\right)=(\partial^m \Omega \setminus \partial^* \Omega) \cup (\partial \Omega \cap \Omega^1)$$
which is $\mathscr{H}^{n-1}$-equivalent to $\partial\Omega \cap \Omega^1$ due to Theorem \ref{Fed}, we can also choose small balls $B_j'$, whose radii are less than $\epsilon$, to cover $\partial \Omega \setminus \left((\partial \Omega \cap \Omega^0) \cup \partial^* \Omega\right)$ such that
\begin{align}
    \label{ziwoguanchadakeng}
\sum_j \mathscr{H}^{n-1}(\partial B_j') \le C(n) \mathscr{H}^{n-1}(\partial \Omega \cap \Omega^1)+\epsilon.
\end{align}
In the above we have also used the equivalency of Hausdorff measure up to dimensional constant when the covering families in the definition of Hausdorff measure are balls instead of general sets.
Also from \eqref{ziwoguanchadakeng} and the choice of $B_j'$, we have
\begin{align}
\label{volumesmall}
    \sum_{j}|B_j'|\lesssim_n \epsilon
\end{align}

Now let $\mathcal{Q}_{\epsilon}$ be the union of all the balls $B_j$ covering $\partial \Omega \cap \Omega^0$, the flat cylinders $C_{ij}$ covering $\partial^* \Omega$ and the balls $B_j'$ covering $\partial \Omega \setminus \left((\partial \Omega \cap \Omega^0) \cup \partial^* \Omega\right)$ as we have chosen above. Since $\partial \Omega$ is compact, we can choose a finite number of the above covering sets whose union, which is denoted as $\mathcal{Q}'_{\epsilon}$, still covers $\partial \Omega$. Let $E_{\epsilon}=\Omega \setminus \mathcal{Q}'_{\epsilon}$. Then from \eqref{smallnew}, \eqref{smallvolume1}, \eqref{aaa1}, \eqref{aaa2}, \eqref{foliation2}, \eqref{ziwoguanchadakeng}, \eqref{volumesmall} and \cite[Theorem 16.3]{Maggi}, we have that $E_{\epsilon} \Subset \Omega$, $\lim_{\epsilon \rightarrow 0}|\Omega \setminus E_{\epsilon}|=0$ and that 
\begin{align*}
    P(E_{\epsilon})=P(\mathcal{Q}'_{\epsilon}, \Omega^1) &\le P(\cup_j B_j, \Omega^1)+P(\cup_{i,j}C_{i,j},\Omega)+\sum_j \mathscr{H}^{n-1}(\partial B_j')\\ & \le C(n)\epsilon+\left(1+C(n)\epsilon\right) P(\Omega)+C(n) \mathscr{H}^{n-1}(\partial \Omega \cap \Omega^1)+ \epsilon.
\end{align*} Hence we obtained the desired sequence $E_{\epsilon}$ as $\epsilon \rightarrow 0$, and this finishes the proof.
\end{proof}

Next, we prove Theorem \ref{nec}.

\begin{proof}[Proof of Theorem \ref{nec}]:
It suffices for us to prove that for any sequence $E_i \Subset \Omega$ such that $E_i \rightarrow \Omega$ in $L^1$, we can always find a subsequence of $E_i$ satisfying \eqref{wa}.

First if $\mathscr{H}^{n-1}(\partial \Omega \cap \Omega^1)=0$, then \eqref{wa} is immediate from lower semicontinuity of sets of finite perimeter. 

If $0<\mathscr{H}^{n-1}(\partial \Omega \cap \Omega^1)<\infty$, note that by \cite[Theorem 6.4]{Maggi} and Theorem \ref{Fed}, we can choose $X \subset \partial \Omega \cap \Omega^1$ such that $\mathscr{H}^{n-1}(X)=\mathscr{H}^{n-1}(\partial \Omega \cap \Omega^1)$ and that for any $x \in X$,
\begin{align}
\label{z1}
    \limsup_{r\rightarrow 0} \frac{P(\Omega; B_r(x))}{r^{n-1}}=0.
\end{align}
Also, by \cite[Theorem 2.3.2]{Evans}, we may also assume that for every $x \in X$, 
\begin{align}
\label{z2}
    \limsup_{r \rightarrow 0} \frac{\mathscr{H}^{n-1}(X \cap B_r(x)) }{\omega_{n-1} r^{n-1}} \ge \frac{1}{2^{n-1}}.
\end{align}
For any $\epsilon>0$,  by Egorov's Theorem, we can choose compact set $K \subset \partial \Omega \cap \Omega^1$ with $\mathscr{H}^{n-1}\left(X \setminus K\right)<\epsilon$ such that \eqref{z1} converges uniformly for $x \in K$. Hence there exists $\delta_1=\delta_1(\epsilon)>0$ such that for any $x \in K$ we have
\begin{align}
\label{liezi0}
    P(\Omega; B_r(x))<\epsilon r^{n-1}, \quad \forall \, 0<r<\delta_1
\end{align}
By \eqref{z2}, for each $x \in K$, we may choose $0<\delta_x<\delta_1$ with \begin{align}
\label{buchong}
    \delta_x^{n-1} \le \frac{2^{n-1}}{\omega_{n-1}} \mathscr{H}^{n-1}(\partial \Omega \cap \Omega^1 \cap B_{\delta_x}(x)).
\end{align}
Since $\{B_{\delta_x}(x)\}_{x \in K}$ cover $K$, by Besicovitch covering theorem, we can choose at most $\xi(n)$ collection of disjoint balls belonging to $\{B_{\delta_x}(x)\}_{x \in K}$ such that the union $U$ of these balls also contains $K$. We donote $U=\cup_{i=1}^{\xi(n)} \cup_{B_j \in \Lambda_i}B_j$, where $\Lambda_i$ denotes each collection of the disjoint balls. Hence it follows
\begin{align}
    P(\Omega; U) \le& \sum_{i=1}^{\xi(n)} \sum_{B_j \in \Lambda_i} P(\Omega; B_j), \quad \mbox{since $|\mu_{\Omega}|$ is an outer measure}\\
    \le& \epsilon \sum_{i=1}^{\xi(n)}\sum_{B_j \in \Lambda_i} r_{B_j}^{n-1}, \quad \mbox{by \eqref{liezi0},}\\
\label{cru}    \le & \epsilon \xi(n)\frac{2^{n-1}}{\omega_{n-1}}  \mathscr{H}^{n-1}(X), \quad \mbox{by \eqref{buchong}}.
\end{align}
In the above $r_{B_j}$ is denoted as the radius of the ball $B_j$. Since $K$ is compact, $U$ contains an $r_0$-neighborhood of $K$, where $0<r_0<\delta_1$.

Let $0<r_1<r_0$ and $K^{r_1}$ be the $r_1$-neighborhood of the compact set $K$, and thus \begin{align}
    \label{clarify}
\overline{K^{r_1}} \subset U.    
\end{align}

By lower semicontinuity of perimeter measure, we can choose a natural number $N>0$, such that for $i>N$,
\begin{align}
\label{yy}
    P(E_i; \mathbb{R}^n \setminus \overline{K^{r_1}}) > P(\Omega; \mathbb{R}^n \setminus \overline{K^{r_1}})-\epsilon.
\end{align}

By definition of Hausdorff measure, for any collection of balls $\{B_i\}$ covering $K$, as long as the radii of the balls are small, say less than some
\begin{align}
\label{qufa}
    0<\delta<r_1,
\end{align}then we have
\begin{align}
\label{liezi4}
    \sum_k \omega_{n-1} r_{B_k}^{n-1} \ge \mathscr{H}^{n-1}(K)-\epsilon.
\end{align}We may further choose $i=i(\delta)>N$ such that
\begin{align}
\label{liezi1}
    |\Omega \setminus E_i|\le \frac{\omega_n\delta^n}{3}.
\end{align}
Note that such $i$ eventually depend only on $\epsilon$.
Since 
\begin{align*}
    \lim_{r\rightarrow 0} \frac{|\Omega \cap B_r(x)|}{\omega_n r^n}=1, \quad \forall x \in K
\end{align*}
and $E_i \Subset \Omega$,  we have
\begin{align}
\label{dens1}
    \lim_{r\rightarrow 0} \frac{|(\Omega \setminus E_i) \cap B_r(x)|}{\omega_n r^n}=1, \quad \forall x \in K.
\end{align}
On the other hand, since $\Omega$ is bounded, we know
\begin{align}
\label{dens0}
    \lim_{r\rightarrow \infty} \frac{|(\Omega \setminus E_i) \cap B_r(x)|}{\omega_n r^n}=0, \quad \forall x \in K
\end{align}
Since for each $x \in K$ and $\mathscr{H}^{n-1}-a.e.$ $r>0$ we have \begin{align}
\label{foliation}
    \mathscr{H}^{n-1}\left(\partial B_{r}(x) \cap (\partial^* E_i \cup \partial^* \Omega)\right)=0,
\end{align}
by \eqref{dens1}, \eqref{dens0} and \eqref{foliation} we can find $r_x>0$ such that 
\begin{align}
\label{douqi}
    \mathscr{H}^{n-1}\left(\partial B_{r_x}(x) \cap (\partial^* E_i \cup \partial^* \Omega)\right)=0
\end{align}and 
\begin{align}
\label{liezi2}
    \frac{|(\Omega \setminus E_i) \cap B_{r_x}(x)|}{\omega_n r_x^n} \in (\frac{1}{3},\frac{1}{2}).
\end{align}
The choice of $r_x$ enables us to apply the relative isoperimetric inequality.

Note that \eqref{liezi1} and \eqref{liezi2} imply $r_x <\delta$, and since $\delta<\delta_1$ as our choice, we can apply the estimate \eqref{liezi0}. Indeed, we have
\begin{align*}
    r_x^{n-1} &\le C(n) \min\{|(\Omega \setminus E_i )\cap B_{r_x}(x)|, B_{r_x}(x)\setminus (\Omega \setminus E_i )|\}, \quad  \mbox{by \eqref{liezi2}} \\
    &\le C_1(n) P(\Omega \setminus E_i, B_{r_x}(x)), \quad \mbox{by \eqref{liezi2} and \cite[Proposition 12.37]{Maggi}}\\
    &= C_1(n)\left(P(E_i, B_{r_x}(x))+P(\Omega, B_{r_x}(x))\right),\quad \mbox{by \eqref{douqi}}\\
    &\le  C_1(n)P(E_i, B_{r_x}(x))+ C_1(n)\epsilon r_x^{n-1}, \quad \mbox{by \eqref{liezi0}}
\end{align*}
Hence
\begin{align}
    \label{liezi5}
P(E_i, B_{r_x}(x)) \ge (C_2(n)-\epsilon) r_x^{n-1}    
\end{align}
Since $K \subset \cup_{x \in A} B_{r_x}(x)$, by Besicovitch covering theorem we have
\begin{align}
\label{liezi6}
    K \subset G:=\cup_{i=1}^{\xi(n)} \cup_{B_j \in \Lambda'_i} B_j \subset K^{r_1},
\end{align}
where $\Lambda'_j$ is the collection of disjoint balls belonging to $\{B_{r_x}(x)\}_{x \in A}$. 
Therefore,
\begin{align}
    \xi(n) P(E_i, G) \ge & \sum_{i=1}^{\xi(n)} \sum_{B_j \in \Lambda_i'} P(E_i, B_j), \quad \mbox{since $\Lambda_i$ contains disjoint balls} \\
    \ge &  (C_2(n)-\epsilon) \sum_{i=1}^{\xi(n)}\sum_{B \in \cup_{i=1}^{\xi(n)} \Lambda_i'} r_{B}^{n-1}, \quad \mbox{by \eqref{liezi5}}\\
   \label{xishu} \ge &  \frac{C_2(n)-\epsilon}{\omega_{n-1}} (\mathscr{H}^{n-1}(K)-\epsilon), \quad \mbox{by \eqref{liezi4} and \eqref{liezi6}}.
\end{align}
Therefore, we have 
\begin{align}
    P(E_i) \ge& P(E_i, \mathbb{R}^n \setminus \overline{K^{r_1}}) + P(E_i, G), \quad \mbox{by \eqref{liezi6}}\\
    \ge & P(\Omega, \mathbb{R}^n \setminus \overline{K^{r_1}})-\epsilon+\frac{C_2(n)-\epsilon}{\xi(n)\omega_{n-1}} (\mathscr{H}^{n-1}(K)-\epsilon), \quad \mbox{by \eqref{yy} and \eqref{xishu}}\\
    \ge & P(\Omega)-P(\Omega,U)-\epsilon+\frac{C_2(n)-\epsilon}{\xi(n)\omega_{n-1}} (\mathscr{H}^{n-1}(K)-\epsilon), \quad \mbox{by \eqref{clarify}}\\
\label{wuqiong}    \ge & P(\Omega)-\epsilon\xi(n)\frac{2^{n-1}}{\omega_{n-1}} \mathscr{H}^{n-1}(X)-\epsilon+\frac{C_2(n)-\epsilon}{\xi(n)\omega_{n-1}} (\mathscr{H}^{n-1}(K)-\epsilon), \quad \mbox{by \eqref{cru}}\\
 \label{cont}   \ge & P(\Omega)+\frac{C_2(n)}{\xi(n)\omega_{n-1}} \mathscr{H}^{n-1}(\partial \Omega \cap \Omega^1)-C\epsilon, \quad \mbox{by the choice of $X$ and $K$}
\end{align}
Overall, the above estimates indicate that for each $\epsilon>0$, we can choose $E_i$ for some $i$ large which eventually only depends on $\epsilon$, such that \eqref{cont} holds. Letting $\epsilon \rightarrow 0$, we conclude \eqref{wa} for a subsequence of $E_i$. This proves the theorem for the case $0<\mathscr{H}^{n-1}(\partial \Omega \cap \Omega^1) <\infty$.

Last, if $\mathscr{H}^{n-1}(\partial \Omega \cap \Omega^1)=\infty$, then for any large number $M>0$, we can choose $X \subset \partial \Omega \cap \Omega^1$ such that $M<\mathscr{H}^{n-1}(X)<\infty$. Then follow the exact argument as above, by \eqref{wuqiong} and let $\epsilon \rightarrow 0$, we have
$$\liminf_{i \rightarrow \infty} P(E_i) \ge P(\Omega)+M.$$ Since $M$ is arbituary, \eqref{wa} is still true when $\mathscr{H}^{n-1}(\partial \Omega \cap \Omega^1)=\infty$.
\end{proof}

\section{Acknowledgement}
The third author is grateful to Professor Monica Torres for her introducing him to the study of interior approximation of sets of finite perimeter when he was studying at Purdue University.  This research is partially supported by NSF grants DMS-1601885 and DMS-1901914 and a Simons Foundation grant  Award 617072.

\end{document}